\documentclass[12pt]{amsart}
\usepackage{amsmath,amssymb,amsfonts,amsthm,amscd,amstext,amsxtra,amsopn,array,url,verbatim,mathrsfs,enumerate,anysize,enumitem}
\usepackage{graphicx}
\usepackage{amsmath,amssymb,amsfonts,amsthm,amssymb,amscd,url,amstext,amsxtra,amsopn
,txfonts}
\usepackage{verbatim}
\marginsize{2.20cm}{2.20cm}{2.20cm}{2.20cm}
\usepackage{times}
\usepackage{amsmath}
\usepackage{amssymb}
\usepackage{amsfonts}
\usepackage{xcolor}
\usepackage{hyperref}

\newtheorem{theorem}{Theorem}[section]
\newtheorem{definition}[theorem]{Definition}
\newtheorem{proposition}[theorem]{Proposition}
\newtheorem{lemma}[theorem]{Lemma}

\newtheorem{corollary}[theorem]{Corollary}

\newtheorem{remark}[theorem]{Remark}
\newtheorem{remarks}[theorem]{Remarks}
\numberwithin{equation}{section}

\DeclareMathOperator{\Gal}{Gal}

\DeclareMathOperator{\modd}{mod}
\DeclareMathOperator{\GL}{GL}

\usepackage{mathtools}

\begin{document}

\begin{center}

\title{On the moments of torsion points modulo primes and their applications}
\author{Amir Akbary}
\address{Department of Mathematics and Computer Science, University of Lethbridge, Lethbridge, Alberta T1K 3M4, Canada}
\email{amir.akbary@uleth.ca}

\author{Peng-Jie Wong}
\address{Department of Mathematics and Computer Science, University of Lethbridge, Lethbridge, Alberta T1K 3M4, Canada}
\email{pengjie.wong@uleth.ca} 
\subjclass[2010]{11N45, 11G05, 11N13, 11R18} 
\keywords{Number of torsion points on reduction mod $p$,  group action, Burnside lemma, Chebotarev density theorem}

\thanks{Research of the first author is partially supported by NSERC. Research of the second author is partially supported by a PIMS postdoctoral fellowship.}

\date{\today}

\begin{abstract}
Let $\mathbb{A}[n]$ be the group of $n$-torsion points of a commutative algebraic group $\mathbb{A}$ defined over a number field $F$. For a prime ideal $\mathfrak{p}$, 
we let $N_{\mathfrak{p}}(\mathbb{A}[n])$
 be the number of  $\mathbb{F}_\mathfrak{p}$-solutions of the system of polynomial equations defining $\mathbb{A}[n]$ when reduced modulo $\mathfrak{p}$. Here, $\mathbb{F}_{\mathfrak{p}}$ is the residue field at $\mathfrak{p}$. Let $\pi_F(x)$ denote the number of primes $\mathfrak{p}$ of $F$ whose norm $N(\mathfrak{p})$ do not exceed $x$. We then, for algebraic groups of dimension one,  compute the $k$-th moment limit 
$$M_k(\mathbb{A}/F, n)=\lim_{x\rightarrow \infty} \frac{1}{\pi_F(x)} \sum_{N(\mathfrak{p}) \leq x} N_{\mathfrak{p}}^k(\mathbb{A}[n])$$
by appealing to the prime number theorem for arithmetic progressions and more generally the Chebotarev density theorem. We further interpret this limit as the number of orbits of the action of the absolute Galois group of $F$ 
on $k$ copies of $\mathbb{A}[n]$ by an application of Burnside's Lemma.
These concrete examples suggest a possible approach for determining the number of orbits of a group acting on $k$ copies of a set.  We also show that for an algebraic set $Y$ of dimension zero, the corresponding arithmetic function $N_\mathfrak{p}(Y)$,
defined on primes  $\mathfrak{p}$ of $F$, has an asymptotic limiting distribution.
\end{abstract}
\maketitle

\end{center}

\section{Introduction}\label{S1}

Let $\mathbb{A}$ be a commutative algebraic group defined over a number field $F$. We let $\mathbb{A}[n]$  be the group of $n$-torsion points of $\mathbb{A}$ and $F(\mathbb{A}[n])$  be the field generated by adding the coordinates of $\mathbb{A}[n]$ to $F$. For a prime  $\mathfrak{p}$ of $F$ that is unramified in $F(\mathbb{A}[n])/F$, let $\mathbb{F}_\mathfrak{p}$ denote the residue field at $\mathfrak{p}$, and let 
$N_{\mathfrak{p}}(\mathbb{A}[n])$
 be the number of  $\mathbb{F}_\mathfrak{p}$-solutions of the system of polynomial equations defining $\mathbb{A}[n]$ when reduced modulo $\mathfrak{p}$.  If $\mathfrak{p}$ ramifies, we set $N_{\mathfrak{p}}(\mathbb{A}[n])=0$. In order to investigate the average size of $N_{\mathfrak{p}}(\mathbb{A}[n])$,  we set
\begin{equation}
\label{limit}
M(\mathbb{A}/F, n)=\lim_{x\rightarrow \infty} \frac{1}{\pi_F(x)} \sum_{N(\mathfrak{p}) \leq x} N_{\mathfrak{p}}(\mathbb{A}[n]),
\end{equation}
where $\pi_F(x)$ denotes the number of primes $\mathfrak{p}$ of $F$ whose norm $N(\mathfrak{p})$ do not exceed $x$. 

In \cite{CK12}, Chen and Kuan investigated the average size of the arithmetic function $N_{\mathfrak{p}}(\mathbb{A}[n])$ by determining $M(\mathbb{A}/F, n)$ as the number of orbits of the group ${\rm Gal}(F(\mathbb{A}[n])/F)$ acting on the $n$-torsion points $\mathbb{A}[n]$ (see \cite[Theorem 1.2]{CK12}). Moreover, they showed that for commutative algebraic groups of dimension one other than $\mathbb{G}_a$, the value of $M(\mathbb{A}/F, n)$ is given by a divisor function.
More precisely, it is known that a commutative algebraic group of dimension one over $F$ is either the additive group $\mathbb{G}_a$, the multiplicative group $\mathbb{G}_m$, an algebraic torus of dimension one, or an elliptic curve. For $\mathbb{G}_a$ we have $M(\mathbb{G}_a/F, n)= 1$. For other cases, the following assertions are proved in \cite[Corollary 1.3, Theorem 1.4, Corollary 1.5, and Theorem 1.6]{CK12}. Here, $\zeta_n$ denotes a primitive $n$-th root of unity and $d(n)$ is the number of positive divisors of $n$.

\begin{theorem}[Chen-Kuan]  \label{Chen-Kuan} (i) Assume that $F\cap \mathbb{Q}(\zeta_n)=\mathbb{Q}$. Then $M(\mathbb{G}_m/F, n)=d(n).$

(ii) 
Let $\mathbb{T}$ denote a one-dimensional torus  over $\mathbb{Q}$. Then there is a positive constant $C:= C(\mathbb{T})$, depending only on $\mathbb{T}$, such that for $n$ with $(n, C)=1$, one has $M(\mathbb{T}/\mathbb{Q}, n)=d(n).$ 

(iii) Assume that $E$ is a non-CM elliptic curve defined over $F$.  Then there is a positive constant $C:= C(E, F)$, depending only on $E$ and $F$,  such that for $n$ with $(n, C)=1$, one has  $M(E/F, n)=d(n)$.  

(iv) Assume that $E$ is an elliptic curve defined over $F$ which has complex multiplication by an order in an imaginary quadratic field $K$.  Assume $FK\cap \mathbb{Q}(\zeta_n)=\mathbb{Q}$. Then there is a positive constant $C:= C(E, F)$, depending only on $E$ and $F$,  such that for $n$ with $(n, 2C)=1$, one has  
$$M(E/F, n)=\begin{cases} d_K(n)&{\rm if}~ K\subseteq F,\\
\frac{1}{2} (d_K(n)+d(n)) &{\rm if}~K\not\subseteq F. 
\end{cases}
$$
Here $d_K(n)$ denotes the number of ideal divisors of the ideal $n\mathcal{O}_K$ in $\mathcal{O}_K$, the ring of integers of $K$. The conditions $FK\cap \mathbb{Q}(\zeta_n)=\mathbb{Q}$  and $(n, 2)=1$ only apply to the case that $K\not\subseteq F$.

\end{theorem}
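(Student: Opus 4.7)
The plan is to deduce all four statements from the orbit-counting characterization given by Chen-Kuan's Theorem 1.2 in \cite{CK12}, namely
\[
M(\mathbb{A}/F,n) \;=\; \#\{\text{orbits of } \mathrm{Gal}(F(\mathbb{A}[n])/F) \text{ acting on } \mathbb{A}[n]\}.
\]
Consequently, for each of the four cases it suffices to (a) identify, as concretely as possible, the image of the mod-$n$ Galois representation and (b) count orbits on $\mathbb{A}[n]$. The integer $C$ appearing in the statement will be produced by step (a), being the obstruction that prevents the representation from being as large as possible.

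For (i), the hypothesis $F\cap\mathbb{Q}(\zeta_n)=\mathbb{Q}$ gives $\mathrm{Gal}(F(\mu_n)/F)\cong(\mathbb{Z}/n\mathbb{Z})^\times$ acting on $\mathbb{G}_m[n]\cong\mathbb{Z}/n\mathbb{Z}$ by multiplication; the orbit of $a\in\mathbb{Z}/n\mathbb{Z}$ is determined by $\gcd(a,n)$, so the number of orbits is $d(n)$. For (ii), a nontrivial one-dimensional torus over $\mathbb{Q}$ is isomorphic to $R^{(1)}_{L/\mathbb{Q}}\mathbb{G}_m$ for a quadratic field $L$; choose $C$ to be (a multiple of) $\mathrm{disc}(L)$ so that for $(n,C)=1$ the mod-$n$ image equals $(\mathbb{Z}/n\mathbb{Z})^\times\rtimes\mathrm{Gal}(L/\mathbb{Q})$. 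A Burnside calculation on $\mathbb{T}[n]\cong\mathbb{Z}/n\mathbb{Z}$, using that the nontrivial twist acts by inversion, still yields $d(n)$ orbits (one per divisor). For (iii), Serre's open image theorem furnishes a $C=C(E,F)$ such that for $(n,C)=1$ the image is all of $\mathrm{GL}_2(\mathbb{Z}/n\mathbb{Z})$; the orbit count on $(\mathbb{Z}/n\mathbb{Z})^2$ factors over prime powers by CRT, and on $(\mathbb{Z}/p^e\mathbb{Z})^2$ the orbit of $(a,b)$ is determined by $v_p(\gcd(a,b,p^e))\in\{0,1,\dots,e\}$, giving $e+1$ orbits and hence $\prod_p(e_p+1)=d(n)$ overall.

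For (iv), the theory of CM (together with a suitable $C$ absorbing the conductor of the order and the primes of bad reduction) gives: if $K\subseteq F$, then the image of Galois on $E[n]$ is the full Cartan $(\mathcal{O}_K/n\mathcal{O}_K)^\times$ acting on $E[n]\cong\mathcal{O}_K/n\mathcal{O}_K$ by multiplication; the orbits correspond to ideal divisors of $n\mathcal{O}_K$, giving $d_K(n)$. If $K\not\subseteq F$, the condition $FK\cap\mathbb{Q}(\zeta_n)=\mathbb{Q}$ guarantees that complex conjugation $\tau$ is present, so the image is the normalizer $(\mathcal{O}_K/n\mathcal{O}_K)^\times\rtimes\langle\tau\rangle$ of index~$2$ over the Cartan. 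I would apply the general identity that for an index-$2$ extension $G_0\le G$ the number of $G$-orbits equals $\tfrac{1}{2}(r+a)$, where $r$ is the number of $G_0$-orbits and $a$ the number of those fixed by any coset representative of $G\setminus G_0$. Here $r=d_K(n)$, and the $\tau$-fixed ideal divisors of $n\mathcal{O}_K$ are precisely those coming from rational ideal divisors, contributing $a=d(n)$ under the coprimality assumption (which, by $(n,2)=1$, makes every prime of $K$ above $n$ unramified and splits the counting cleanly between inert, split, and ramified primes).

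The main obstacle is step (a) in the elliptic curve cases: producing the constant $C$ and justifying full Galois image. For (iii) this is Serre's open image theorem, whose input one must carefully package. For (iv) the analogous input is the CM counterpart (Deuring, Serre-Tate), together with the subtle bookkeeping of complex conjugation and the hypothesis $(n,2)=1$, which is needed so that $\tau$ acts cleanly on $(\mathcal{O}_K/n\mathcal{O}_K)^\times$ and the Burnside count separates. The divisor-function outputs are then clean consequences of the orbit classification on $\mathcal{O}_K/n\mathcal{O}_K$ by ideals, made multiplicative via the Chinese remainder theorem.
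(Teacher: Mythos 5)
This theorem is not proved in the paper at all: it is quoted from Chen--Kuan \cite{CK12} (with parts (iii)--(iv) also in \cite{H14}), and your reconstruction --- interpret $M(\mathbb{A}/F,n)$ as the number of Galois orbits on $\mathbb{A}[n]$, determine the image of the mod-$n$ representation up to a coprimality constant $C$ (linear disjointness for $\mathbb{G}_m$ and the torus, Serre's open image theorem in the non-CM case, Deuring/Serre--Tate plus the normalizer of the Cartan in the CM case), and then count orbits by divisors --- is exactly the strategy of \cite{CK12} and the one this paper itself advocates and generalizes (Theorem \ref{first generalization} and Theorem \ref{main} with $k=1$); your orbit counts, including the index-two identity yielding $\tfrac12(d_K(n)+d(n))$ from the self-conjugate ideal divisors, are correct. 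The only slip is cosmetic: in (ii) the image of Galois in $\Aut(\mathbb{T}[n])\cong(\mathbb{Z}/n\mathbb{Z})^\times$ is the full unit group itself (the quadratic twist acts through $-1$), not a semidirect product with $\Gal(L/\mathbb{Q})$, but since inversion is already multiplication by $-1$ this does not affect the count $d(n)$.
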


\begin{remarks} (i) In \cite{CK12} the function $N_{\mathfrak{p}}(\mathbb{A}[n])$ is defined, for a prime $\mathfrak{p}$ of good reduction of $\mathbb{A}$, as the number of $n$-torsion points in the group of $\mathbb{F}_\mathfrak{p}$-rational  points of the reduction modulo $\mathfrak{p}$ of $\mathbb{A}$. Our definition of $N_{\mathfrak{p}}(\mathbb{A}[n])$ may differ from that definition only at finitely many prime ideals $\mathfrak{p}$, and thus it will not affect the assertions of Theorem \ref{Chen-Kuan}.

(ii) Parts (iii) and (iv) of Theorem \ref{Chen-Kuan} are also stated and proved in \cite[Corollaries 1, 3, and 4]{H14}.

(iii) The conditions $FK \cap \mathbb{Q}(\zeta_n)=\mathbb{Q}$ and $(n, 2)=1$ in part (iv) of Theorem \ref{Chen-Kuan} is not clearly stated in  \cite[Theorem 1.6]{CK12}; however, these conditions are used in the proof of Theorem 1.6 in \cite{CK12}.

(iv) In \cite[Theorem 1.4]{CK12} it is also proved that the constant $C$ in part (ii) of Theorem \ref{Chen-Kuan} can be taken as $1$ if $m>0$ and as $D_m$ if $m<0$, where $m$ is the square-free integer in the equation $x^2-my^2=1$ defining $\mathbb{T}$, and $D_m$ is the discriminant of the quadratic field $\mathbb{Q}(\sqrt{m})$. Also, it is shown, for $F=\mathbb{Q}$,  that in part (iv) of Theorem \ref{Chen-Kuan} the constant $C$ can be taken as $6\Delta_E$, where $\Delta_E$ is the discriminant of $E$ (see \cite[Theorem 1.6]{CK12}). In addition, the extensions of Theorem \ref{Chen-Kuan} to the case of function fields are given in \cite{CK15}. 

\end{remarks}

The proof of the first three parts of Theorem \ref{Chen-Kuan} can be unified and simplified considerably if one interprets the limit \eqref{limit} as the number of the orbits of  ${\rm GL}_m (\mathbb{Z}/n\mathbb{Z})$, the group of invertible $m\times m$ matrices with entries in $\mathbb{Z}/n\mathbb{Z}$, acting on the product of $m$ copies of  $\mathbb{Z}/n\mathbb{Z}$, when $m=1$ or $2$. In this direction, the following can be considered as a generalization of the underlying result in parts (i), (ii), and (iii) of Theorem \ref{Chen-Kuan}.

\begin{theorem}
\label{first generalization}
Let $L$ be a number field of class number 1.  Then the number of orbits of ${\rm GL}_m (\mathcal{O}_L/n\mathcal{O}_L)$ acting on $\left(\mathcal{O}_L/n\mathcal{O}_L\right)^m$ is $d_L(n)$, where $d_L(\cdot)$ is the number field analogue of the divisor function.
\end{theorem}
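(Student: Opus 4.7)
The plan is to combine the Chinese Remainder Theorem with a local analysis analogous to Smith normal form for a single column over a principal ideal domain. The class-number-one hypothesis will be used only to ensure that each prime of $\mathcal{O}_L$ is principal.

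First, I would factor $n\mathcal{O}_L=\mathfrak{p}_1^{e_1}\cdots\mathfrak{p}_r^{e_r}$ and apply the Chinese Remainder Theorem to obtain
\[
\mathcal{O}_L/n\mathcal{O}_L\;\cong\;\prod_{i=1}^{r} \mathcal{O}_L/\mathfrak{p}_i^{e_i}.
\]
This ring isomorphism is compatible with the formation of $m\times m$ matrices and length-$m$ column vectors, so $\mathrm{GL}_m(\mathcal{O}_L/n\mathcal{O}_L)$ splits as the direct product of the $\mathrm{GL}_m(\mathcal{O}_L/\mathfrak{p}_i^{e_i})$, which act componentwise on the corresponding decomposition of $(\mathcal{O}_L/n\mathcal{O}_L)^m$. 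Hence the total orbit count is the product of the local orbit counts. Since $d_L$ is multiplicative with $d_L(\mathfrak{p}^e)=e+1$, it suffices to prove that $\mathrm{GL}_m(\mathcal{O}_L/\mathfrak{p}^e)$ has exactly $e+1$ orbits on $(\mathcal{O}_L/\mathfrak{p}^e)^m$.

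For the local count, I would write $\mathfrak{p}=(\pi)$ (using class number one) and set $R=\mathcal{O}_L/\mathfrak{p}^e$. Then $R$ is a local ring with maximal ideal $(\bar\pi)$ in which every element admits a representation $\bar\pi^k u$ with $u\in R^\times$ and a uniquely determined $k\in\{0,1,\ldots,e\}$ (the value $k=e$ meaning zero). Given $\bar v=(\bar v_1,\ldots,\bar v_m)^T\in R^m$, let $k=\min_i v_\mathfrak{p}(\bar v_i)$. The claim is that the orbit of $\bar v$ contains the representative $(\bar\pi^k,0,\ldots,0)^T$, reached by three standard moves, each realized by an explicit element of $\mathrm{GL}_m(R)$: (i) a permutation matrix that moves a component of valuation $k$ into the first position; (ii) the diagonal matrix $\mathrm{diag}(u^{-1},1,\ldots,1)$ that normalizes the leading coordinate to $\bar\pi^k$; and (iii) a unipotent matrix $I-\sum_{i\ge 2}w_i E_{i1}$, with $w_i\in R$ chosen so that $\bar\pi^k w_i=\bar v_i$, clearing every subsequent entry. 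Distinctness of the $e+1$ resulting representatives follows because the ideal $(\bar v_1,\ldots,\bar v_m)\subset R$ is a $\mathrm{GL}_m(R)$-invariant of the orbit, and the ideals $(\bar\pi^k)$ for $0\le k\le e$ are pairwise distinct.

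I do not anticipate any serious obstacle: each matrix used (permutation, diagonal with unit entries, strictly lower-triangular unipotent) is manifestly invertible over $R$, so no lifting to $\mathcal{O}_L$ is needed. The only modest technical point is checking that elements of $R$ admit the form $\bar\pi^k u$, which is immediate from the identification $R=(\mathcal{O}_L)_\mathfrak{p}/\mathfrak{p}^e(\mathcal{O}_L)_\mathfrak{p}$ as a quotient of a DVR. Combined with the CRT reduction and the multiplicativity of $d_L$, the local tally $e+1$ yields the stated value $d_L(n)$.
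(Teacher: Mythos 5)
Your argument is correct, but it takes a genuinely different route from the paper's. The paper works globally with $(\mathcal{O}_L/n\mathcal{O}_L)^m$ (first for $L=\mathbb{Q}$): it takes candidate representatives $\mathbf{r}=(r,0,\dots,0)^T$ for $r\mid n$, shows that the gcd $(r,n)$ --- your ideal invariant in disguise --- separates their orbits, computes each orbit's size as the number $\Psi(n/r)$ of admissible first columns of matrices in $\mathrm{GL}_m(\mathbb{Z}/(n/r)\mathbb{Z})$ (using surjectivity of the reduction $\mathrm{GL}_m(\mathbb{Z}/n\mathbb{Z})\to\mathrm{GL}_m(\mathbb{Z}/(n/r)\mathbb{Z})$), and then verifies the counting identity $\sum_{r\mid n}\Psi(n/r)=n^m$ to conclude that these orbits exhaust the space; for general $L$ the class-number-one hypothesis is invoked to choose a generator of each ideal divisor of $(n)$ and to absorb the resulting unit ambiguity into the $(1,1)$-entry of a suitable matrix. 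You instead localize first via CRT and prove exhaustion constructively, by explicit column reduction over the chain ring $\mathcal{O}_L/\mathfrak{p}^e$ (permutation, unit scaling, unipotent clearing), with distinctness of the $e+1$ representatives coming from the invariant ideal generated by the entries. Your route avoids both the counting identity and the matrix-lifting step, and it makes transparent that the class-number-one hypothesis is inessential: since $\mathcal{O}_L/\mathfrak{p}^e$ is a quotient of the DVR $(\mathcal{O}_L)_{\mathfrak{p}}$, a uniformizer exists in the quotient whether or not $\mathfrak{p}$ is principal in $\mathcal{O}_L$, so your proof in fact establishes the statement for arbitrary $L$. What the paper's version buys in exchange is the explicit orbit sizes $\Psi(n/r)$, which your argument does not produce.
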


In another direction, as a consequence of the results of this paper, we give a generalization of Theorem \ref{Chen-Kuan} by considering the $k$-th moment limit
$$M_k(\mathbb{A}/F, n)=\lim_{x\rightarrow \infty} \frac{1}{\pi_F(x)} \sum_{N(\mathfrak{p}) \leq x} N_{\mathfrak{p}}^k(\mathbb{A}[n]).$$

Note that, for every $k\geq 1$,  $M_k(\mathbb{G}_a/F, n)=1$. In order to state our result for other algebraic groups of dimension one, we need to introduce the following notation. For $k\in \mathbb{Z}^{\geq 0}$ and $n\in \mathbb{N}$, let 

$$M_k(n):= \sum_{\substack{d,e\\de\mid n}}\frac{d^k \mu(e)}{\varphi(de)},$$
where $\mu$ is the M\"{o}bius function, and $\varphi$ is the Euler function. Observe that for $a, b\in \mathbb{N}$ and integer $k\geq 0$, by letting $$P_k(a, b)=\frac{a^k-b^k}{a-b},$$
we have $$M_k(n)=\prod_{\ell^s\| n} \left( \sum_{e=1}^s P_k(\ell^e, \ell^{e-1})+1 \right).$$
Note that $M_0(n)=1$ and $M_1(n)=d(n)$. Thus,  $M_k(n)$ can be considered as a generalization of the divisor function.

We have the following generalization of Theorem \ref{Chen-Kuan}.

\begin{theorem}  
\label{second generalization}
(i) Assume that $F\cap \mathbb{Q}(\zeta_n)=\mathbb{Q}$. Then $M_k(\mathbb{G}_m/F, n)=M_k(n).$

(ii) 
Let $\mathbb{T}$ be a one-dimensional torus defined over $\mathbb{Q}$. Then there is a positive constant $C:= C(\mathbb{T})$, depending only on $\mathbb{T}$, such that for $n$ with $(n, C)=1$, we have $M_k(\mathbb{T}/\mathbb{Q}, n)=M_k(n).$ 

(iii) Assume that $E$ is a non-CM elliptic curve defined over $F$.  Then there is a positive constant $C:= C(E, F)$, depending only on $E$ and $F$,  such that for square-free $n$ with $(n, C)=1$, we have  

$$M_k(E/F, n)=\prod_{\ell \mid n}  \frac{\ell^{2k-1}+ \ell^{k-1}( \ell^3-2\ell-1) +\ell^3-2\ell^2-\ell+3}{(\ell-1)^2(\ell+1)}.$$

(iv) Assume that $E$ is an elliptic curve defined over $\mathbb{Q}$ that has complex multiplication by $\mathcal{O}_K$.
Then there is a positive constant $C:= C(E)$, depending only on $E$,  such that for prime $\ell$ with $(\ell, 2C)=1$, we have

$$M_k(E/\mathbb{Q}, \ell)
=  \frac{\ell^{2k}+ (d_K(\ell)-1)(\ell^{k+1}+\ell^k)+2\ell^2-(d_K(\ell)-1)\ell-(d_K(\ell)+2)}{2(\ell^2-1)}.
$$
\end{theorem}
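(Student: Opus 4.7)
The plan is to recast everything as an orbit count by combining the Chebotarev density theorem with Burnside's lemma. Let $G_n := \Gal(F(\mathbb{A}[n])/F)$. Since an unramified Frobenius $\sigma_\mathfrak{p}$ fixes precisely the $\mathbb{F}_\mathfrak{p}$-rational torsion, $N_\mathfrak{p}(\mathbb{A}[n])$ equals the number of points in $\mathbb{A}[n]$ fixed by $\sigma_\mathfrak{p}$. Taking $k$-th powers corresponds to the diagonal action of $\sigma_\mathfrak{p}$ on $\mathbb{A}[n]^k$, so $N_\mathfrak{p}^k(\mathbb{A}[n]) = |\mathrm{Fix}_{\mathbb{A}[n]^k}(\sigma_\mathfrak{p})|$. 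Averaging over primes of norm at most $x$ via Chebotarev's theorem and then invoking Burnside's lemma gives
$$M_k(\mathbb{A}/F, n) = \frac{1}{|G_n|} \sum_{\sigma \in G_n} |\mathrm{Fix}_{\mathbb{A}[n]^k}(\sigma)| = \#\{\text{orbits of } G_n \text{ on } \mathbb{A}[n]^k\}.$$
Each part of the theorem is thereby reduced to an orbit count for an explicit Galois image.

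For part (i), the hypothesis $F \cap \mathbb{Q}(\zeta_n) = \mathbb{Q}$ together with $F(\mathbb{G}_m[n]) = F(\zeta_n)$ forces $G_n \cong (\mathbb{Z}/n\mathbb{Z})^\times$ acting on $\mathbb{Z}/n\mathbb{Z}$ by multiplication. An element $a \in (\mathbb{Z}/n\mathbb{Z})^\times$ fixes exactly $\gcd(n, a-1)^k$ tuples in $(\mathbb{Z}/n\mathbb{Z})^k$, so Burnside yields
$$M_k(\mathbb{G}_m/F, n) = \frac{1}{\varphi(n)} \sum_{a \in (\mathbb{Z}/n\mathbb{Z})^\times} \gcd(n, a-1)^k.$$
The Jordan-totient identity $m^k = \sum_{d \mid m} J_k(d)$ together with the standard count $|\{a \in (\mathbb{Z}/n\mathbb{Z})^\times : d \mid a - 1\}| = \varphi(n)/\varphi(d)$ for $d \mid n$ then converts this to $\sum_{d \mid n} J_k(d)/\varphi(d)$; expanding $J_k(d) = \sum_{ab = d} a^k \mu(b)$ recovers $M_k(n)$ directly from its definition. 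Part (ii) follows by base-changing the non-split torus $\mathbb{T}$ to its quadratic splitting field, where it becomes $\mathbb{G}_m$, and using the fact that for $(n, C) = 1$ the image of Galois on $\mathbb{T}[n] \cong \mu_n$ is the full $(\mathbb{Z}/n\mathbb{Z})^\times$, reducing to part (i).

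For part (iii), Serre's open image theorem furnishes a constant $C(E, F)$ such that for $(n, C) = 1$ the mod-$n$ Galois image equals all of $\GL_2(\mathbb{Z}/n\mathbb{Z})$. For squarefree $n$, the Chinese Remainder Theorem factors this as $\prod_{\ell \mid n} \GL_2(\mathbb{F}_\ell)$ and turns the Burnside sum into a product over $\ell \mid n$, so it suffices to handle prime level. At a single prime $\ell$, I would stratify $\GL_2(\mathbb{F}_\ell)$ by $\dim \ker(g - I)$: the identity gives $\ell^2$ fixed points; a matrix with $1$ as a simple eigenvalue (characterized by $g \neq I$ and $\operatorname{tr} g = 1 + \det g$) gives $\ell$ fixed points; the remaining elements fix only the zero vector. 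A short trace–determinant count shows that exactly $\ell^3 - 2\ell$ matrices in $\GL_2(\mathbb{F}_\ell)$ have $1$ as an eigenvalue, one of which is the identity. Plugging the three contributions into Burnside's formula and simplifying over the denominator $|\GL_2(\mathbb{F}_\ell)| = \ell(\ell-1)^2(\ell+1)$ produces the stated rational function of $\ell$.

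Part (iv) is the most delicate step. For a CM elliptic curve $E/\mathbb{Q}$ by $\mathcal{O}_K$, the mod-$\ell$ Galois image lies in the normalizer $N(C)$ of the Cartan $C = (\mathcal{O}_K/\ell\mathcal{O}_K)^\times$ inside $\GL_2(\mathbb{F}_\ell)$, and coincides with $N(C)$ away from a finite set of primes $\ell$ absorbed into the constant $C(E)$. I would split the Burnside sum between the Cartan coset, where only $a = 1$ contributes beyond the trivial fixed point (yielding $\ell^2$), and the nontrivial coset $\sigma C$, where $\sigma$ realizes the nontrivial element of $\Gal(K/\mathbb{Q})$. The key observation is that $(\sigma a)^2 = N(a) \cdot I$, so the characteristic polynomial of $\sigma a$ is $t^2 - N(a)$, and $1$ is an eigenvalue precisely when $N(a) = 1$; the hypothesis $\ell \nmid 2$ makes $\sigma a$ diagonalizable in that case, giving an $\ell$-element fixed space. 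The main obstacle is combining the split and inert cases into a single closed form: both $|C|$ and the number of norm-one elements in $C$ depend on how $\ell$ factors in $K$, and tracking these dependences uniformly in terms of $d_K(\ell)$ is what produces the claimed unified formula.
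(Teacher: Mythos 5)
Your parts (i)--(iii) are correct, and they reach the stated formulas by a route that is partly inverse to the paper's. The paper proves the reduction ``moment $=$ orbit count'' exactly as you do (its Theorem \ref{main}, via Chebotarev and Burnside), but then it evaluates the orbit counts \emph{arithmetically}: for (i)--(ii) it computes $\lim\frac{1}{\pi(x)}\sum_{p\le x}N_p^k(x^n-1)$ directly from $N_p(x^n-1)=\gcd(p-1,n)$, M\"obius inversion and the prime number theorem in arithmetic progressions (and for the torus it uses the explicit description of $\mathbb{Q}(\mathbb{T}[n])$ from \cite{CK12}), while you compute the orbit count purely group-theoretically via Burnside and the Jordan-totient identity $m^k=\sum_{d\mid m}J_k(d)$, $\#\{a\in(\mathbb{Z}/n\mathbb{Z})^\times: a\equiv 1\ (\mathrm{mod}\ d)\}=\varphi(n)/\varphi(d)$. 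Both give $M_k(n)$; your version is shorter for this purpose, the paper's is designed to showcase the arithmetic method. In (iii) your trace--determinant count ($\ell^3-2\ell$ matrices of $\GL_2(\mathbb{F}_\ell)$ with eigenvalue $1$, one of them the identity) agrees with the paper's conjugacy-class count $\ell^3-2\ell-1$ for non-identity elements (note only that your parenthetical ``simple eigenvalue'' also silently includes the nontrivial unipotents, which is harmless since they too fix exactly $\ell$ points), and the passage from prime to squarefree level by multiplicativity of the orbit count is the same as the paper's, both resting on Serre's theorem for the surjectivity/independence of the mod-$\ell$ images for $\ell\nmid C$.

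Part (iv) has a genuine gap. Your claim that on the Cartan coset ``only $a=1$ contributes beyond the trivial fixed point'' is false when $\ell$ splits in $K$: there $C\simeq\mathbb{F}_\ell^\times\times\mathbb{F}_\ell^\times$ acts on $\mathcal{O}_K/\ell\mathcal{O}_K\simeq\mathbb{F}_\ell\times\mathbb{F}_\ell$ coordinatewise, and the $2(\ell-2)$ elements $(1,a_2)$, $(a_1,1)$ with the other coordinate $\neq 1$ each fix an entire eigenline, i.e.\ $\ell$ points. Your statement is correct only in the inert case, where $C\simeq\mathbb{F}_{\ell^2}^\times$ and $a-1$ is invertible for $a\neq1$. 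Moreover, once you repair this, your method will not land on the displayed identity in the split case: a correct Burnside count over the full normalizer $N(C)$ of the split Cartan (which is the Galois image here, since $K\subseteq\mathbb{Q}(E[\ell])$ and $[\mathbb{Q}(E[\ell]):\mathbb{Q}]=2(\ell-1)^2$) gives numerator $\ell^{2k}+(3\ell-5)\ell^k+(\ell-2)(2\ell-3)$ over $2(\ell-1)^2$; for $k=2$ this is $(\ell+2)(\ell+3)/2$, whereas the stated formula with $d_K(\ell)=4$ gives $(\ell^2+3\ell+6)/2$ (e.g.\ $15$ versus $12$ at $\ell=3$). The two agree at $k=1$ and in the inert case $d_K(\ell)=2$, but not for split $\ell$ and $k\ge2$; the discrepancy traces to the degree $[K(E[\ell]):K]$, which is $(\ell-1)^2$ (not $\ell^2-1$) when $\ell$ splits, the quantity the paper's own derivation of Proposition \ref{prop-two}(ii) uses. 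Note also that the paper's route for (iv) is different from yours: it does not compute over $N(C)$ directly, but splits the primes $p$ into supersingular ones (inert or ramified in $K$, handled via cyclicity of the odd part of $E_p(\mathbb{F}_p)$ and the level-$1$ calculation) and ordinary ones (split in $K$, handled by passing to $K$, invoking Theorem \ref{main} over $K$ and Theorem \ref{first generalization} to get $d_K(\ell)$, and solving a linear system in the densities). So as written your part (iv) both contains an incorrect fixed-point count and, after correction, cannot terminate at the claimed unified formula in the split case; you would need to either restrict to inert $\ell$ or re-derive the split-case expression from the corrected normalizer computation.
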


\begin{remark}
For $k\geq 3$, the $\ell$- factor in the product expression for $M_k(E/F, n)$ in part (iii) of the above theorem is a polynomial function of degree $2k-4$ of $\ell$ with integral coefficients. For $k=1$ (resp. $k=2$), the $\ell$-factor is $2$ (resp. $\ell+3$). The expression in part (iv) is a polynomial function of degree $2k-2$ of $\ell$ with half-integral coefficients.
\end{remark}

Theorem \ref{second generalization}, similarly to Theorem \ref{Chen-Kuan},  is intimately related to a group theory result. In order to describe the connection, we introduce a more general setup.

Let $\overline{F}$ denote the algebraic closure of a number field $F$. Let $Y$ be an algebraic set (affine or projective), given as the set of $\overline{F}$-solutions of a finite family of polynomial equations 
$E_Y$ 
defined over the ring  of integers $\mathcal{O}_F$ of $F$.  (If $Y$ is projective, ``polynomial equations" means ``homogeneous polynomial equations" and ``$\mathbb{F}_{\mathfrak{p}}$-solutions" means ``projective $\mathbb{F}_{\mathfrak{p}}$-solutions".) For an unramified  prime ideal $\mathfrak{p}$ in the extension $F(Y)/F$, we let 
$$N_{\mathfrak{p}}(Y):=\#\{ {\rm solutions~ of~ }E_Y (\modd \mathfrak{p})~{\rm in}~ \mathbb{F}_\mathfrak{p}\}.$$ 
If $Y$ is the set of $\overline{F}$-solutions of a single polynomial $f$, we also denote $N_{\mathfrak{p}}(Y)$ by $N_{\mathfrak{p}}(f)$. 

\begin{remark}
Theorem 1.2 (c) of \cite{Se12}  provides a generalization of Theorem \ref{Chen-Kuan} and another interpretation for the limit \eqref{limit} for the case $F=\mathbb{Q}$. For an algebraic set $Y$ defined over $\mathbb{Z}$, let $N_p(Y)$ be as defined above.
 Then if the dimension ${\rm dim} Y(\mathbb{C}) \leq d_0$, one has 
$$\lim_{x\rightarrow \infty} \frac{1}{\pi(x^{d_0+1})} \sum_{p\leq x} N_p(Y)=r_0(Y),$$
where $r_0(Y)$ is the number of $\mathbb{Q}$-irreducible components of dimension $d_0$ of $Y$ over 
$\mathbb{Q}$. Here, $\pi(x):= \pi_\mathbb{Q}(x)$.
Note that for $d_0=0$, the above limit is analogous to the one evaluated in Theorem \ref{Chen-Kuan}. For example, for the algebraic set $Y$ defined by $x^n-1= \prod_{d\mid n} \Phi_d(x)$, where $\Phi_d(x)$ is the $d$-th cyclotomic polynomial, we have $r_0(Y)=d(n)$.  
\end{remark}

We now assume that $Y$ has dimension zero (so it is finite) and let $M_k(G, Y)$ be the  the number of orbits of $G={\rm Gal}(F(Y)/F)$ acting on $k$ copies of $Y$. Since there are only finitely many prime ideals that ramify in $F(Y)/F$, for a ramified prime ideal $\mathfrak{p}$ we define  $N_{\mathfrak{p}}(Y)=0$ for convenience. The following main result represents $M_k(G, Y)$ as an asymptotic average of the values $N_{\mathfrak{p}}^{k}(Y)$ as $\mathfrak{p}$ varies over the set of primes of $F$.

\begin{theorem}
\label{main}
Let $Y$ be an algebraic set of dimension zero defined over $F$,  $G={\rm Gal}(F(Y)/F)$, and $M_k(G, Y)$ as defined above. Then, for $k\in\Bbb{N}$, we have
$$
\lim_{x\rightarrow \infty}\frac{1}{\pi_{F}(x)}\sum_{N(\mathfrak{p})\le x}N_{\mathfrak{p}}^k(Y)= M_k(G, Y).
$$

\end{theorem}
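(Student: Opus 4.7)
The plan is to identify $N_{\mathfrak{p}}(Y)$ with the number of fixed points of a Frobenius element acting on $Y$, then combine the Chebotarev density theorem with Burnside's lemma.

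First, I would set up the dictionary between reductions modulo $\mathfrak{p}$ and the Galois action. Since $Y$ has dimension zero it is a finite set whose coordinates generate $F(Y)$ over $F$. For an unramified prime $\mathfrak{p}$ in $F(Y)/F$, pick a prime $\mathfrak{P}$ of $F(Y)$ above $\mathfrak{p}$ with Frobenius element $\mathrm{Frob}_{\mathfrak{P}}\in G$. The crucial observation is that a point $y\in Y$ reduces modulo $\mathfrak{P}$ to an $\mathbb{F}_{\mathfrak{p}}$-point of the reduced system of equations if and only if $\mathrm{Frob}_{\mathfrak{P}}(y)=y$, and the reduction map is a bijection from $\{y\in Y:\mathrm{Frob}_{\mathfrak{P}}(y)=y\}$ onto the set of $\mathbb{F}_{\mathfrak{p}}$-solutions (using that no two distinct points of $Y$ can collapse modulo $\mathfrak{P}$ for all but finitely many $\mathfrak{p}$, and handling those exceptional primes by absorbing them into the ramified set). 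Consequently
$$N_{\mathfrak{p}}(Y)=|\mathrm{Fix}_Y(\mathrm{Frob}_{\mathfrak{P}})|, \qquad N_{\mathfrak{p}}^k(Y)=|\mathrm{Fix}_{Y^k}(\mathrm{Frob}_{\mathfrak{P}})|,$$
where $G$ acts diagonally on $Y^k$. Since the right-hand side only depends on the conjugacy class of $\mathrm{Frob}_{\mathfrak{P}}$, this gives a well-defined class function of the Frobenius conjugacy class $\mathrm{Frob}_{\mathfrak{p}}$.

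Second, I would apply the Chebotarev density theorem to the class function $g\mapsto |\mathrm{Fix}_{Y^k}(g)|$. Chebotarev tells us that Frobenius conjugacy classes equidistribute in $G$ with density proportional to their size, so
$$\lim_{x\rightarrow \infty}\frac{1}{\pi_F(x)}\sum_{N(\mathfrak{p})\le x}N_{\mathfrak{p}}^k(Y)=\frac{1}{|G|}\sum_{g\in G}|\mathrm{Fix}_{Y^k}(g)|.$$
The finitely many ramified (or otherwise excluded) primes contribute $0$ to the average since the sum is normalized by $\pi_F(x)\to\infty$, and our convention $N_{\mathfrak{p}}(Y)=0$ at ramified primes is therefore harmless.

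Finally, Burnside's lemma applied to the diagonal action of $G$ on $Y^k$ identifies the right-hand side above with the number of $G$-orbits on $Y^k$, which is $M_k(G,Y)$ by definition, completing the argument. The only nontrivial step is the first one: verifying that the count $N_{\mathfrak{p}}(Y)$ defined via reductions of the defining equations really agrees with the Frobenius fixed-point count for all but finitely many $\mathfrak{p}$, particularly in the projective case where one must check that distinct projective points in $Y$ stay distinct modulo $\mathfrak{P}$ outside a finite exceptional set. Everything after this identification is a direct application of Chebotarev and Burnside.
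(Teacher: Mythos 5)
Your proposal is correct and follows essentially the same route as the paper: identify $N_{\mathfrak{p}}(Y)$ with the number of Frobenius fixed points on $Y$ (up to finitely many harmless primes), apply the Chebotarev density theorem to average the resulting class function $g\mapsto\chi^k(g)$ over $G$, and conclude with Burnside's lemma applied to the diagonal action on $Y^k$. The paper organizes the Chebotarev step by partitioning primes according to the sets $G(m)$ of elements fixing exactly $m$ points, but this is the same computation, and your explicit attention to the injectivity of reduction modulo $\mathfrak{P}$ outside a finite set of primes is a point the paper treats implicitly.
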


The above theorem can be considered as a generalization of a classical result due to Frobenius and Kronecker (see \cite[p. 436]{Se03}).

\begin{theorem}[Frobenius-Kronecker]
\label{Frobenius-Kronecker}
For an irreducible polynomial  $f\in \Bbb{Z}[x]$,  we have
$$
\lim_{x\rightarrow \infty} \frac{1}{\pi(x)} \sum_{p\le x }N_p(f)=1.
$$
\end{theorem}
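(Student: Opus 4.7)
The plan is to deduce the Frobenius--Kronecker theorem directly from Theorem \ref{main} applied to the zero-dimensional algebraic set cut out by $f$.

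First I would take $Y = \{\alpha \in \overline{\mathbb{Q}} : f(\alpha) = 0\}$, the set of roots of $f$. Since $f$ has only finitely many roots, $Y$ is a zero-dimensional algebraic set defined over $\mathbb{Q}$ in the sense of the paper, and by definition $N_p(Y) = N_p(f)$ for every prime $p$ (finitely many bad primes contribute $O(1)$ to the partial sums and are therefore absorbed in the limit). The splitting field of $f$ over $\mathbb{Q}$ is precisely the field $\mathbb{Q}(Y)$, and $G = \Gal(\mathbb{Q}(Y)/\mathbb{Q})$ is the Galois group of $f$.

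Second, because $f$ is irreducible over $\mathbb{Z}$, hence over $\mathbb{Q}$, any two roots of $f$ are conjugate over $\mathbb{Q}$, so $G$ acts transitively on $Y$. In particular, taking $k = 1$ in the definition of $M_k(G, Y)$, there is a single $G$-orbit on $Y$, i.e., $M_1(G, Y) = 1$.

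Third, applying Theorem \ref{main} with $F = \mathbb{Q}$ and $k = 1$ immediately yields
$$\lim_{x\to\infty} \frac{1}{\pi(x)} \sum_{p \le x} N_p(f) = M_1(G, Y) = 1.$$
There is no substantive obstacle beyond the two standard ingredients: the Galois-theoretic fact that irreducibility of $f$ is equivalent to transitivity of $G$ on its roots, and the prior Theorem \ref{main} itself, which bundles the Chebotarev density theorem with Burnside's Lemma. In effect, the Frobenius--Kronecker theorem is the case $k = 1$ of Theorem \ref{main} specialized to a transitive Galois set.
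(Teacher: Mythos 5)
Your proposal is correct and follows essentially the same route as the paper, which also obtains this statement as the $k=1$ case of Theorem \ref{main} with $Y$ the set of roots of $f$ in $\overline{\mathbb{Q}}$, using that irreducibility makes the Galois action on the roots transitive so that $M_1(G,Y)=1$. No gaps.
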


Indeed, let $F=\mathbb{Q}$, $Y=$ the set of roots of $f$ in $\overline{\mathbb{Q}}$, $k=1$, and $G={\rm Gal}(F(Y)/F)$ in Theorem \ref{main}. Then, observing that the action of the Galois group
 on the set of roots of $f$ is transitive, 
we obtain Theorem \ref{Frobenius-Kronecker} as a corollary of Theorem \ref{main}. 
Note that although the action of $G$ on $Y$ in Theorem \ref{Frobenius-Kronecker} is transitive,   the action on $k\geq 2$ copies of $Y$ is not transitive if $|Y|>1$.
 Thus, determining $M_k(G, Y)$ appears to be a non-trivial  problem for $k\geq 2$, even when $Y$ is defined by an irreducible polynomial.

As a direct consequence of Theorem \ref{main}, we  establish the existence of an asymptotic distribution function for the arithmetic function $N_\mathfrak{p}(Y)$.

\begin{corollary}
\label{distribution}
Let $Y$ be an algebraic set of dimension zero defined over $F$. Then the arithmetic function $N_\mathfrak{p}(Y)$ possesses an asymptotic distribution function. In other words, the sequence
$$H_{n}(z)=\frac{\#\{\mathfrak{p};~N(\mathfrak{p})\leq n~ {\rm and}~ N_\mathfrak{p}(Y) \leq z \}}{\pi_F(n)}$$ 
converges weakly to a distribution function $H$, as $n\rightarrow \infty$ (i.e., there is a distribution function $H$ where $H_n(z)$ converges pointwise to $H(z)$ at any continuity point $z$ of $H$). Moreover, for complex $t$-values $|t|<1$,
$$\varphi_H(t)= \lim_{n\rightarrow \infty} \frac{1}{\pi_F(n)}{\displaystyle{\sum_{N(\mathfrak{p}) \leq n}} e^{i tN_\mathfrak{p}(Y)}      }=\sum_{k=0}^{\infty} M_k(G, Y)\frac{(it)^k}{k!},$$ 
where $G={\rm Gal}(F(Y)/Y)$, and $\varphi_H(t)$ is the characteristic function of $H$.
\end{corollary}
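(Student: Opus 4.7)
The plan is to exploit the dimension-zero hypothesis, which forces $Y$ to be finite: setting $m = |Y|$, we have $0 \leq N_\mathfrak{p}(Y) \leq m$ for every prime $\mathfrak{p}$, so each $H_n$ is a step function supported on $\{0, 1, \ldots, m\}$. Writing $q_{n,j} = \#\{\mathfrak{p} : N(\mathfrak{p}) \leq n,\, N_\mathfrak{p}(Y) = j\}/\pi_F(n)$ for $0 \leq j \leq m$, we have $H_n(z) = \sum_{j \leq z} q_{n,j}$, and Theorem \ref{main} supplies convergence of every moment,
$$\sum_{j=0}^{m} j^k q_{n,j} = \frac{1}{\pi_F(n)} \sum_{N(\mathfrak{p}) \leq n} N_\mathfrak{p}^k(Y) \longrightarrow M_k(G,Y) \qquad (n \to \infty),$$
for each $k \geq 1$, while the case $k = 0$ is the trivial identity $\sum_j q_{n,j} = 1$.

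The next step is to pass from moments to individual weights. Taking $k = 0, 1, \ldots, m$ yields a linear system in the $q_{n,j}$ whose coefficient matrix is the nonsingular Vandermonde matrix $(j^k)_{0 \leq j,k \leq m}$; inverting it, each $q_{n,j}$ converges to a limit $q_j \geq 0$ with $\sum_j q_j = 1$. Then $H(z) := \sum_{j \leq z} q_j$ is a bona fide distribution function, its continuity set contains the complement of $\{0, 1, \ldots, m\}$, and at each continuity point $H_n(z) \to H(z)$, which is precisely the weak convergence required.

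For the characteristic function, convergence of the weights yields
$$\varphi_{H_n}(t) = \frac{1}{\pi_F(n)} \sum_{N(\mathfrak{p}) \leq n} e^{itN_\mathfrak{p}(Y)} = \sum_{j=0}^{m} e^{itj} q_{n,j} \longrightarrow \sum_{j=0}^{m} e^{itj} q_j = \varphi_H(t).$$
Expanding the exponential and interchanging the finite sum in $j$ with the series in $k$ then gives $\varphi_H(t) = \sum_{k \geq 0} (it)^k/k! \cdot \sum_j j^k q_j$, and inserting the moment identity $\sum_j j^k q_j = M_k(G,Y)$ produces the desired formula. The bound $M_k(G,Y) \leq m^k$ ensures absolute convergence for all $t \in \mathbb{C}$, comfortably beyond the stated range $|t| < 1$.

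Because $Y$ is finite and the limit measure is discrete with support in $\{0, 1, \ldots, m\}$, there is no analytic subtlety and no serious obstacle to the argument; the proof reduces to Theorem \ref{main} combined with a Vandermonde inversion. The only point to mind is that Theorem \ref{main} is stated for $k \in \mathbb{N}$, so the $k = 0$ case — needed to see that the limiting weights sum to one — is handled separately, but it is immediate.
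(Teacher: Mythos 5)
Your argument is correct, but it takes a more hands-on route than the paper. The paper simply notes that the $k$-th moments of $H_n$ converge to $M_k(G,Y)\ll |Y|^k$ by Theorem \ref{main} and then invokes the general method of moments from Elliott (Lemmas 1.43 and 1.44 of \cite{E79}) to obtain existence and uniqueness of $H$ and the characteristic-function identity for $|t|<1$. You instead exploit the discreteness directly: since $N_\mathfrak{p}(Y)$ takes at most the values $0,1,\dots,m$ with $m=|Y|$, the measures $H_n$ are determined by finitely many weights $q_{n,j}$, and inverting the nonsingular Vandermonde system built from the first $m+1$ moments shows each $q_{n,j}$ converges, giving an explicit discrete limit law $H=\sum_j q_j\delta_j$, pointwise convergence of $H_n$ everywhere (not just at continuity points), and a characteristic-function expansion valid for all complex $t$, not merely $|t|<1$. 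This is more elementary and self-contained (no appeal to Elliott) and yields strictly more information about $H$; the paper's route is shorter and generalizes to unbounded situations where no finite support is available. Two small points to tidy: the bound $0\le N_\mathfrak{p}(Y)\le m$ can fail at finitely many degenerate primes (the same implicit convention underlies the proof of Theorem \ref{main}), which does not affect any of the limits but should be flagged; and the identity $\sum_j j^k q_j=M_k(G,Y)$ for \emph{all} $k$, which you insert at the end, deserves the one-line justification that it follows by comparing the two limits of the finite sums $\sum_j j^k q_{n,j}$.
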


We next describe that how Theorem \ref{main} can be exploited to answer some pure group-theoretic questions.
A fundamental question regarding the action of a group $G$ on a set $X$ is to determine the number of orbits in $X$ under the action of $G$. Moreover, if the number of orbits in $X$ under the action of $G$ is known, one may further ask whether there exists a formula for $M_k(G, X)$, the number of orbits in $k$ copies of $X$ under the action of $G$. Indeed, both are deep questions. 
Here, we show that how Theorem \ref{main} can be employed in computing $M_k(G, X)$.
The following definition describes our setup.

\begin{definition}
\label{arithmetic}
An action of a finite group $G$ on a finite set $X$ is called ``{arithmetically realizable over a number field $F$}", if there is a set $Y$ of solutions of a finite family of equations defined over $\mathcal{O}_F$, a bijection $\psi$ from $X$ to $Y$, and a group isomorphism $\phi$ from $G$ to ${\rm Gal}(F(Y)/F)$ such that $\psi (gx)=\phi(g)\psi(x)$.
\end{definition}

Inspiring by this definition, we can rewrite Theorem \ref{main} as the following.

\medskip\par

\noindent{\bf Theorem \ref{main} (Second Version)} {\it Suppose that the finite group $G$ has an action on a finite set $X$ that is arithmetically realizable over $F$. Let $Y$ be as given  in Definition \ref{arithmetic}. Then, for any $k\in\Bbb{N}$, we have
$$
M_k(G, X)=\lim_{x\rightarrow \infty}\frac{1}{\pi_{F}(x)}\sum_{N(\mathfrak{p})\le x}N_{\mathfrak{p}}^k(Y).$$

}

\medskip\par

This formulation of Theorem \ref{main} provides a line of approach in computing $M_k(G, X)$ for an arithmetically realizable action.
Of course, more generally one can consider the problem of computing $M_k(G, X)$ for an action of a group $G$ on a set $X$. In this generality, the problem appears to be difficult, and  we refer the reader to Cameron's survey \cite{Ca00} for results regarding the computation of $M_k(G, X)$ when the action of a permutation group $G$ (finite or not) on a set X is oligomorphic (i.e., $G$ has only finitely many orbits in $X^k$ for all $k$).

Our purpose here is to demonstrate by some examples that for arithmetically realizable actions a number-theoretic approach via Theorem \ref{main}  and the Chebotarev density theorem might help one to compute $M_k(G, X)$. For instance, as a consequence of Propositions \ref{main-prop} and \ref{prop-two}, we have the following explicit values for $M_k(G, X)$. (In all cases below, the actions are considered multiplicatively and in (ii) also componentwise.).

\begin{theorem}
\label{second-theorem}
(i) If $G=(\Bbb{Z}/n\Bbb{Z})^\times$ and $X=\Bbb{Z}/n\Bbb{Z}$, we have $M_k(G, X)=M_k(n).$

(ii) 
Let $$G=\left\{ \left(\begin{array}{cc} 1&0\\b&d \end{array} \right);~b\in \Bbb{Z}/n\Bbb{Z}~{\textrm and}~d\in (\Bbb{Z}/n\Bbb{Z})^\times \right\}\simeq  (\Bbb{Z}/n\Bbb{Z})^\times  \ltimes \Bbb{Z}/n\Bbb{Z}.$$ 
If $X=\left( \{1\} \times \Bbb{Z}/n\Bbb{Z} \right)  \times  \left(\{0\} \times \Bbb{Z}/n\Bbb{Z}\right)$, then
$M_k(G, X)=M_{2k-1}(n) .$


(iii) For  prime $\ell$, if $G=\GL_2(\Bbb{Z}/\ell\Bbb{Z})$ and $X=\Bbb{Z}/\ell\Bbb{Z}\times\Bbb{Z}/\ell\Bbb{Z}$, then
$$M_k(G, X)= \frac{\ell^4-2\ell^3-\ell^2+3\ell}{(\ell^2-\ell)(\ell^2-1)}+\ell^k \frac{\ell^3-2\ell-1}{(\ell^2-\ell)(\ell^2-1)}+\ell^{2k}\frac{1}{(\ell^2-\ell)(\ell^2-1)}.$$

\end{theorem}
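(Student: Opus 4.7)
The plan is to prove each part by exhibiting the given action as arithmetically realizable over a suitable number field, and then applying the second version of Theorem \ref{main} to convert the orbit-counting into an arithmetic moment limit.

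For part (i), take $F=\mathbb{Q}$ and let $Y=\mu_{n}$ be the zero-dimensional algebraic set cut out by $x^{n}-1=0$. The canonical isomorphism $\Gal(\mathbb{Q}(\mu_{n})/\mathbb{Q})\cong(\mathbb{Z}/n\mathbb{Z})^{\times}$ together with the bijection $i\mapsto\zeta_{n}^{i}$ supply the required data of Definition \ref{arithmetic}. Hence
$$M_{k}(G,X)=\lim_{x\to\infty}\frac{1}{\pi(x)}\sum_{p\le x}N_{p}(\mu_{n})^{k}=M_{k}(\mathbb{G}_{m}/\mathbb{Q},n),$$
and this equals $M_{k}(n)$ by Theorem \ref{second generalization}(i).

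For part (iii), Serre's open image theorem furnishes a non-CM elliptic curve $E$ over a number field $F$ whose mod-$\ell$ Galois representation is surjective. Then $\Gal(F(E[\ell])/F)\cong\GL_{2}(\mathbb{Z}/\ell\mathbb{Z})$ acts on $E[\ell]\cong(\mathbb{Z}/\ell\mathbb{Z})^{2}$ in the standard way, giving an arithmetic realization of the action. Combining the second version of Theorem \ref{main} with Theorem \ref{second generalization}(iii) yields $M_{k}(G,X)=M_{k}(E/F,\ell)$. Regrouping the explicit expression
$$\frac{\ell^{2k-1}+\ell^{k-1}(\ell^{3}-2\ell-1)+\ell^{3}-2\ell^{2}-\ell+3}{(\ell-1)^{2}(\ell+1)}$$
by powers of $\ell^{k}$ and using $(\ell-1)^{2}(\ell+1)=\ell^{-1}(\ell^{2}-\ell)(\ell^{2}-1)$ recovers the three-term form of the theorem.

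For part (ii), realize $G$ via Kummer theory: choose $a\in\mathbb{Q}^{\times}$ so that $\Gal(\mathbb{Q}(\zeta_{n},a^{1/n})/\mathbb{Q})\cong(\mathbb{Z}/n\mathbb{Z})^{\times}\ltimes\mathbb{Z}/n\mathbb{Z}=G$; a generic choice, such as a prime $a$ not dividing $n$ and avoiding sub-$m$-th-power obstructions for $m\mid n$, suffices. Let $Y\subset\mathbb{A}^{2}$ be the zero-dimensional algebraic set defined by $u^{n}-a=0$ and $v^{n}-1=0$, so that $|Y|=n^{2}$. The bijection
$$\psi\colon X\longrightarrow Y,\qquad ((1,b),(0,d))\mapsto(\zeta_{n}^{b}a^{1/n},\,\zeta_{n}^{d}),$$
is $G$-equivariant, providing the arithmetic realization. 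By the second version of Theorem \ref{main},
$$M_{k}(G,X)=\lim_{x\to\infty}\frac{1}{\pi(x)}\sum_{p\le x}N_{p}(x^{n}-a)^{k}\,N_{p}(x^{n}-1)^{k}.$$
An application of the Chebotarev density theorem to $\mathbb{Q}(\zeta_{n},a^{1/n})/\mathbb{Q}$ then reduces this to a sum over $G$: at a prime $p$ whose Frobenius lies in the class of $(b',d')\in G$, a direct fixed-point count gives $N_{p}(x^{n}-1)=\gcd(d'-1,n)$, while $N_{p}(x^{n}-a)$ equals $\gcd(d'-1,n)$ when $\gcd(d'-1,n)\mid b'$ and vanishes otherwise. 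Since these quantities are invariant under conjugation in $G$, Chebotarev yields
$$M_{k}(G,X)=\frac{1}{n\varphi(n)}\sum_{d'\in(\mathbb{Z}/n\mathbb{Z})^{\times}}\frac{n}{\gcd(d'-1,n)}\cdot\gcd(d'-1,n)^{2k}=\frac{1}{\varphi(n)}\sum_{d'\in(\mathbb{Z}/n\mathbb{Z})^{\times}}\gcd(d'-1,n)^{2k-1}.$$
Finally, applying Burnside's lemma to the action of $(\mathbb{Z}/n\mathbb{Z})^{\times}$ on $(\mathbb{Z}/n\mathbb{Z})^{2k-1}$ and invoking part (i) of this theorem with exponent $2k-1$ identifies this last sum as $M_{2k-1}(n)$.

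The main obstacle lies in part (ii): one must verify that the Kummer Galois group is genuinely all of $G$ (imposing mild conditions on $a$), check that the fixed-point counts are invariant under conjugation so that Chebotarev applies class-by-class, and then recognize the resulting gcd-sum as the orbit count $M_{2k-1}(n)$ by recycling part (i).
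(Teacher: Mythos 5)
Your overall strategy---realize each action arithmetically and convert $M_k(G,X)$ into a moment limit via the second version of Theorem \ref{main}---is exactly the paper's, and your realizations in (i) and (ii) coincide with the paper's choices of $Y$. The genuine problem is circularity in (i) and (iii): you evaluate the moment limits by citing Theorem \ref{second generalization}(i) and (iii), but in this paper Theorem \ref{second generalization} is deduced in Section \ref{Section 6} \emph{from} Theorem \ref{second-theorem}, the statement you are proving. The non-circular inputs are Propositions \ref{main-prop}(i) and \ref{prop-two}(i), which compute precisely these limits directly from the prime number theorem in arithmetic progressions and the Chebotarev density theorem; replacing your citations by these propositions repairs both parts (and makes the regrouping of the $\ell$-factor in (iii) unnecessary, since Proposition \ref{prop-two}(i) already gives the three-term form). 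A second gap in (iii): Serre's open image theorem, applied to a fixed non-CM curve, yields surjectivity of the mod-$\ell$ representation only for $\ell$ beyond an unspecified constant depending on that curve, so it does not by itself produce, for an arbitrary \emph{given} prime $\ell$ (in particular $\ell=2,3,5$), a curve with $\Gal(\mathbb{Q}(E[\ell])/\mathbb{Q})\simeq\GL_2(\mathbb{Z}/\ell\mathbb{Z})$; the paper sidesteps this by taking explicit curves (17a3 for $\ell\neq 2$, 11a2 for $\ell=2$) whose mod-$\ell$ image is full. Relatedly, in (ii) your Chebotarev count over conjugacy classes of $G$ needs $[\mathbb{Q}(\zeta_n,a^{1/n}):\mathbb{Q}]=n\varphi(n)$; you flag this but leave it unverified, whereas the paper secures it by the hypotheses that $a$ is squarefree and positive, with $a\nmid n$ when $n$ is even, via \cite[Lemma 1]{M05}.

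Where you genuinely differ from the paper is the evaluation in (ii). The paper uses $N_p(Y)=N_p(f_{n,a})N_p(f_{n,1})$ and then quotes Proposition \ref{main-prop}(iii), a M\"obius-inversion plus Chebotarev computation; you instead count fixed points class-by-class in $G$ (your counts $\gcd(d'-1,n)$, and $\gcd(d'-1,n)$ or $0$ according to whether $\gcd(d'-1,n)\mid b'$, are correct), arrive at $\frac{1}{\varphi(n)}\sum_{d'}\gcd(d'-1,n)^{2k-1}$, and identify this with $M_{2k-1}(n)$ by Burnside together with part (i). This is a clean alternative, though note that it makes the passage through primes redundant in (ii)---the same identity follows from Burnside's Lemma applied directly to $G$ acting on $X^k$---and it leans on part (i), so the circularity in (i) must be repaired first for (ii) to stand.
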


The proof of Theorem \ref{second-theorem} relies on explicit computations of the moment limit in Theorem \ref{main} for certain algebraic sets $Y$ via the prime number theorem in arithmetic progressions and more generally by the  Chebotarev density theorem. We summarize these concrete evaluations in Propositions \ref{main-prop} and \ref{prop-two}. 
For $n\in \mathbb{N}$ and integer $a\in \mathbb{Z}$, let $$f_{n, a}(x):= x^n-a.$$
%
We have the following.

\begin{proposition}
\label{main-prop}
Let $n$ be a natural number. 
Let $a$ be a square-free positive integer if $n$ is odd, and let $a$ be a square-free positive integer such that $a\nmid n$ if $n$ is even.
Then the following estimates hold:

(i) For $k\in \mathbb{Z}^{\geq 0}$, $n\in \mathbb{N}$, we have $$
\lim_{x\rightarrow \infty}\frac{1}{\pi(x)}\sum_{p\le x}N_{p}^k(f_{n,1})
=M_k(n).
$$

(ii) For $k\in \mathbb{N}$, $n \in \mathbb{N}$,  we have
$$
\lim_{x\rightarrow \infty}\frac{1}{\pi(x)}\sum_{p\le x}N_{p}^k(f_{n,a})
=M_{k-1}(n).
$$

(iii) For any $k_1\in\mathbb{N}$, $k_2\in\mathbb{Z}^{\geq 0}$,  we have
$$
\lim_{x\rightarrow \infty}\frac{1}{\pi(x)}\sum_{p\le x}N_{p}^{k_1}(f_{n,a})N_{p}^{k_2}(f_{n,1})
=M_{k_1+k_2-1}(n).
$$


\end{proposition}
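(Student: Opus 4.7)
The plan is to treat part (i) by elementary counting, part (ii) via Theorem \ref{main} combined with Burnside's lemma, and then deduce part (iii) as an immediate consequence of (ii). For (i), I begin from the fact that for $p\nmid n$ the solutions of $x^n=1$ in $\mathbb{F}_p^\times$ form the cyclic subgroup of order $\gcd(n,p-1)$; the finitely many primes $p\mid n$ contribute $O(1)$ and are absorbed into the error term. I then expand
$$\gcd(n,p-1)^k=\sum_{d\mid n}d^k\,\mathbf{1}_{\gcd(n,p-1)=d}=\sum_{\substack{d,e\\ de\mid n}}d^k\mu(e)\,\mathbf{1}_{de\mid p-1}$$
by Möbius inversion on the divisor lattice, and apply the prime number theorem in arithmetic progressions $|\{p\le x:p\equiv 1\pmod{m}\}|\sim\pi(x)/\varphi(m)$ termwise. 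The resulting expression is exactly the defining sum for $M_k(n)$.

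For part (ii), I set $L=\mathbb{Q}(\zeta_n,\sqrt[n]{a})$ and $Y=\{\zeta_n^i\sqrt[n]{a}:0\le i<n\}$, so that $\mathbb{Q}(Y)=L$, and invoke Theorem \ref{main} to rewrite the target limit as $M_k(G,Y)$ with $G=\Gal(L/\mathbb{Q})$. The main obstacle is the Galois-theoretic step: showing that the stated hypotheses force $|G|=n\varphi(n)$, equivalently that $x^n-a$ remains irreducible over $\mathbb{Q}(\zeta_n)$. By Capelli's criterion this reduces to verifying $a\notin\mathbb{Q}(\zeta_n)^p$ for every prime $p\mid n$. For odd $p$ this is automatic: an inclusion $\mathbb{Q}(\sqrt[p]{a})\subset\mathbb{Q}(\zeta_n)$ would embed this degree-$p$ extension into an abelian field, forcing it to be Galois over $\mathbb{Q}$, contradicting the fact that for $a$ square-free positive $\mathbb{Q}(\sqrt[p]{a})$ has only one real embedding. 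For $p=2$ the criterion reduces to $\mathbb{Q}(\sqrt{a})\not\subset\mathbb{Q}(\zeta_n)$, which is enforced by the hypothesis $a\nmid n$ (for even $n$) through the standard criterion that a quadratic field lies in $\mathbb{Q}(\zeta_n)$ iff its discriminant divides $n$. With this, $G\simeq(\mathbb{Z}/n\mathbb{Z})^\times\ltimes\mathbb{Z}/n\mathbb{Z}$, and identifying $Y$ with $\mathbb{Z}/n\mathbb{Z}$ via $i\leftrightarrow\zeta_n^i\sqrt[n]{a}$ turns the action into $(s,b)\cdot i=si+b$.

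Burnside's lemma now gives $M_k(G,Y)=|G|^{-1}\sum_{g\in G}|\mathrm{Fix}_Y(g)|^k$. The computation hinges on the observation that $(s,b)$ fixes $i$ iff $(s-1)i\equiv -b\pmod{n}$, so $|\mathrm{Fix}_Y(s,b)|=\gcd(s-1,n)$ when $\gcd(s-1,n)\mid b$ and $0$ otherwise; summing first over the $n/\gcd(s-1,n)$ admissible values of $b$ collapses Burnside's average to
$$M_k(G,Y)=\frac{1}{\varphi(n)}\sum_{s\in(\mathbb{Z}/n\mathbb{Z})^\times}\gcd(s-1,n)^{k-1}.$$
To recognise this as $M_{k-1}(n)$, I partition the sum by the value $e=\gcd(s-1,n)$, use that the reduction map $(\mathbb{Z}/n\mathbb{Z})^\times\twoheadrightarrow(\mathbb{Z}/e\mathbb{Z})^\times$ is surjective with kernel of order $\varphi(n)/\varphi(e)$ to count $|\{s:e\mid s-1\}|=\varphi(n)/\varphi(e)$, and then Möbius-invert to pass from "$e\mid\gcd$" to "$e=\gcd$"; the resulting sum is $\sum_{ef\mid n}e^{k-1}\mu(f)/\varphi(ef)=M_{k-1}(n)$.

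Part (iii) then follows at once from (ii) by a pointwise identity. For $p\nmid na$, set $d_p=\gcd(n,p-1)$ and let $\mathbf{1}_A$ denote the indicator that $a$ is an $n$-th power modulo $p$; then $N_p(f_{n,1})=d_p$ while $N_p(f_{n,a})=d_p\mathbf{1}_A$, so that (using $\mathbf{1}_A^j=\mathbf{1}_A$ for $j\ge 1$)
$$N_p^{k_1}(f_{n,a})N_p^{k_2}(f_{n,1})=d_p^{k_1+k_2}\mathbf{1}_A=N_p^{k_1+k_2}(f_{n,a}).$$
Applying part (ii) with $k=k_1+k_2$ yields the claimed value $M_{k_1+k_2-1}(n)$.
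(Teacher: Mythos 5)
Your parts (i) and (iii) coincide with the paper's own argument: (i) is the same expansion of $\gcd(n,p-1)^k$ by M\"obius inversion followed by the prime number theorem in arithmetic progressions, and (iii) is the same pointwise reduction $N_p^{k_1}(f_{n,a})N_p^{k_2}(f_{n,1})=N_p^{k_1+k_2}(f_{n,a})$ fed into (ii). Part (ii) is where you genuinely diverge. The paper stays analytic: it uses the criterion that $N_p(f_{n,a})=(p-1,n)$ exactly when $a^{(p-1)/d}\equiv 1 \pmod p$ with $d=(p-1,n)$ (and $0$ otherwise), M\"obius-inverts, and evaluates each inner sum by the Chebotarev density theorem for the fields $\mathbb{Q}(\zeta_{de},a^{1/d})$, quoting Moree's lemma for the degrees $d\varphi(de)$. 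You instead route the problem back through Theorem \ref{main}: the limit becomes the orbit count of $G=\Gal(\mathbb{Q}(\zeta_n,a^{1/n})/\mathbb{Q})$ on $k$ copies of the root set, you use Capelli to show the hypotheses force $|G|=n\varphi(n)$ (so $G$ is the full affine group acting by $i\mapsto si+b$), and Burnside collapses the count to $\frac{1}{\varphi(n)}\sum_{s\in(\mathbb{Z}/n\mathbb{Z})^\times}\gcd(s-1,n)^{k-1}=M_{k-1}(n)$, which checks out. The trade-off: the paper only needs the degrees of the subfields $\mathbb{Q}(\zeta_{de},a^{1/d})$, a weaker input than full irreducibility of $x^n-a$ over $\mathbb{Q}(\zeta_n)$, while your version needs one clean Galois-theoretic fact and is then purely combinatorial; it also reverses the logical flow of Section 5, where the paper deduces the affine orbit count of Theorem \ref{second-theorem} (ii) \emph{from} Proposition \ref{main-prop} (iii), whereas your Burnside computation establishes that orbit count directly. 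Two small points to patch: Capelli's criterion has the extra clause $a\notin -4\,\mathbb{Q}(\zeta_n)^4$ when $4\mid n$, which is indeed subsumed by $a\notin\mathbb{Q}(\zeta_n)^2$ since $i\in\mathbb{Q}(\zeta_n)$ in that case, but you should say so; and your degree-$p$ argument tacitly assumes $a\geq 2$ so that $x^p-a$ is irreducible over $\mathbb{Q}$ --- the same implicit restriction the paper inherits from Moree's lemma, so it is not a defect relative to the paper, but it deserves a sentence.
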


We next let $E$ be an elliptic curve defined over $\mathbb{Q}$. For prime $\ell$ let 
$E[\ell]$ denote the group of $\ell$-torsion points of $E$. The following assertions hold.

\begin{proposition}
\label{prop-two}
(i) Assume that ${\rm Gal}(\mathbb{Q}(E[\ell])/\mathbb{Q})\simeq {\rm GL}_2(\mathbb{Z}/\ell \mathbb{Z})$. Then
$$
\lim_{x\rightarrow \infty}\frac{1}{\pi(x)}\sum_{p\le x}N_{p}^k(E[\ell])
=\frac{\ell^4-2\ell^3-\ell^2+3\ell}{(\ell^2-\ell)(\ell^2-1)}+\ell^k \frac{\ell^3-2\ell-1}{(\ell^2-\ell)(\ell^2-1)}+\ell^{2k}\frac{1}{(\ell^2-\ell)(\ell^2-1)} .
$$

(ii)  Let $E$ have complex multiplication by $\mathcal{O}_K$, the ring of integers of an imaginary quadratic field $K$. For a fixed odd prime $\ell$, assume that ${\rm Gal}({K}(E[\ell])/{K})\simeq {\rm GL}_1(\mathcal{O}_K/\ell \mathcal{O}_K)$. Then
$$
\lim_{x\rightarrow \infty}\frac{1}{\pi(x)}\sum_{p\le x}N_{p}^k(E[\ell])
=\frac{2\ell^2-(d_K(\ell)-1)\ell-(d_K(\ell)+2)}{2(\ell^2-1)}+\ell^k \frac{d_K(\ell)-1}{2(\ell-1)}+\ell^{2k}\frac{1}{2(\ell^2-1)},
$$
where $d_K(\ell)$ is the number field analogue of the divisor function. More precisely, $d_K(\ell)=4, 3, 2$ if $\ell$ splits, ramifies, or remains inert in $K$, respectively.

\end{proposition}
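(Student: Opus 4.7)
The plan is to combine Theorem~\ref{main} with Burnside's lemma, then perform an explicit case analysis inside $G=\Gal(\mathbb{Q}(E[\ell])/\mathbb{Q})$. By Theorem~\ref{main} the limit equals $M_k(G,E[\ell])$, and Burnside's lemma rewrites this as $\frac{1}{|G|}\sum_{g\in G}|E[\ell]^g|^k$. Since $E[\ell]\cong\mathbb{F}_\ell^2$ and $E[\ell]^g=\ker(g-I)$ is an $\mathbb{F}_\ell$-subspace, $|E[\ell]^g|$ can only be $1$, $\ell$, or $\ell^2$, so the evaluation reduces to counting, for each of these three values, how many $g\in G$ realize it.

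For part~(i) the hypothesis identifies $G$ with $\GL_2(\mathbb{F}_\ell)$. The identity is the unique element with $|E[\ell]^g|=\ell^2$. Elements with $|E[\ell]^g|=\ell$ are exactly those having $1$ as an eigenvalue while differing from $I$; they split into two conjugacy types: the semisimple classes of $\mathrm{diag}(1,\lambda)$ for $\lambda\in\mathbb{F}_\ell^*\setminus\{1\}$ (each of size $\ell(\ell+1)$) and the single unipotent class of a Jordan block $\bigl(\begin{smallmatrix}1&1\\0&1\end{smallmatrix}\bigr)$ (of size $\ell^2-1$), contributing $(\ell-2)\ell(\ell+1)+(\ell^2-1)=\ell^3-2\ell-1$ matrices in total. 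Subtracting from $|G|=(\ell^2-1)(\ell^2-\ell)$ leaves $\ell^4-2\ell^3-\ell^2+3\ell$ elements with $|E[\ell]^g|=1$, and assembling the resulting weighted average gives the claimed formula.

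For part~(ii) the CM structure realizes $E[\ell]$ as a free rank-one $\mathcal{O}_K/\ell\mathcal{O}_K$-module; the hypothesis identifies the index-$2$ subgroup $H=\Gal(\mathbb{Q}(E[\ell])/K)$ with $(\mathcal{O}_K/\ell\mathcal{O}_K)^*$ acting by multiplication, while the non-trivial coset $G\setminus H$ consists of $\mathcal{O}_K$-conjugate-linear maps $P\mapsto\beta\overline{P}$ parametrized by $\beta\in(\mathcal{O}_K/\ell\mathcal{O}_K)^*$. For $g\in H$ acting as multiplication by $\alpha$, $|E[\ell]^g|$ equals the order of $\ker(\alpha-1)$ inside $\mathcal{O}_K/\ell\mathcal{O}_K$; for $g\in G\setminus H$, a short computation shows $|E[\ell]^g|=\ell$ precisely when $N(\beta)=\beta\bar\beta=1$. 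The main obstacle is then to carry out these counts uniformly in the split, inert, and ramified factorizations of $\ell\mathcal{O}_K$: both the number of $\alpha$ annihilating a nontrivial quotient and the number of norm-one $\beta$ vary with the decomposition type, so combining the three cases into a single closed form in $d_K(\ell)$ requires careful bookkeeping to arrive at the stated expression.
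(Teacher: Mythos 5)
Part (i) of your proposal is correct, and it is essentially the paper's own argument in a different order: the paper applies the Chebotarev density theorem directly to the three classes of Frobenius elements fixing $1$, $\ell$, or $\ell^2$ torsion points, counting $\ell^2-1$ unipotent elements and $(\ell-2)(\ell^2+\ell)$ semisimple elements with eigenvalue $1$; you route the same count through Theorem \ref{main} and Burnside's lemma. The numbers agree, so nothing is lost.

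Part (ii) is where the two arguments genuinely diverge, and also where your write-up stops at the decisive step. The paper never analyzes the conjugate-linear coset at all: it splits the primes $p$ according to their behavior in $K$, treats inert/ramified $p$ via supersingularity and cyclicity of the odd part of $E_p(\mathbb{F}_p)$ (so $N_p(E[\ell])=(\ell,p+1)$, handled by the prime number theorem in arithmetic progressions), and for split $p$ it extracts the density of $N_p=\ell^2$ from Chebotarev, computes the first moment over split primes by comparing with $\frac12\sum_{\mathfrak{p}\subset\mathcal{O}_K}N_{\mathfrak{p}}(E[\ell])$ via Theorem \ref{main} over $K$ and Theorem \ref{first generalization} (giving $d_K(\ell)$), and then solves a two-by-two linear system for the unknown densities. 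Your alternative — Theorem \ref{main} over $\mathbb{Q}$ plus Burnside on the full group $G$, with the linear part $(\mathcal{O}_K/\ell\mathcal{O}_K)^{\times}$ and the coset acting by $P\mapsto\beta\overline{P}$, where the coset element has $\ell$ fixed points exactly when $\beta\bar{\beta}=1$ — is a correct setup (the norm-one criterion does hold in all three splitting types).

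The gap is that the deferred ``careful bookkeeping'' is the entire proof, and if you carry it out you do \emph{not} arrive at the stated expression except when $\ell$ is inert. Burnside gives an average over $|G|=2u$ elements, $u=\#(\mathcal{O}_K/\ell\mathcal{O}_K)^{\times}$, so for instance the coefficient of $\ell^{2k}$ must be $1/(2u)$, i.e.\ $\frac{1}{2(\ell^2-1)}$, $\frac{1}{2(\ell-1)^2}$, $\frac{1}{2\ell(\ell-1)}$ in the inert, split, ramified cases, whereas the stated formula has $\frac{1}{2(\ell^2-1)}$ throughout. Concretely, in the split case your count gives $c_1=2(\ell-2)+(\ell-1)$ elements with exactly $\ell$ fixed points and hence $M_2(G,E[\ell])=\frac{1}{2}(\ell+2)(\ell+3)$, while the stated formula gives $\frac{1}{2}(\ell^2+3\ell+6)$; in the ramified case the stated formula at $k=1$ equals $\frac{5}{2}$, which cannot be a number of orbits. (Everything does match when $\ell$ is inert, $d_K(\ell)=2$, where the field structure of $\mathcal{O}_K/\ell\mathcal{O}_K$ forces the linear part to contribute nothing and the coset contributes $\ell+1$ norm-one elements.) So as a proof of the Proposition as printed your plan cannot be completed for split or ramified $\ell$; what your computation actually exposes is that the step ``$[K(E[\ell]):K]=\ell^2-1$ (according to the assumption)'' in the paper's proof is compatible with the hypothesis $\Gal(K(E[\ell])/K)\simeq\GL_1(\mathcal{O}_K/\ell\mathcal{O}_K)$ only in the inert case, and outside that case the closed form needs $\ell^2-1$ replaced by $u$ and the remaining coefficients recomputed. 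Either restrict your argument to inert $\ell$ (where it is complete once the two counts above are inserted, and is cleaner than the paper's) or state explicitly that the printed right-hand side is being corrected.
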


In the rest of the paper we prove our results. The structure of the paper is as follows. In Section \ref{Section 2} we give a proof of Theorem \ref{first generalization}. Section \ref{S3} provides a proof of our general result, Theorem \ref{main}, and Corollary \ref{distribution}. In Section \ref{S2}, we compute some concrete examples of the $k$-th moment in Theorem \ref{main} by appealing to the prime number theorem in arithmetic progressions and the Chebotarev density theorem (Propositions \ref{main-prop} and \ref{prop-two}). Combining the results proved in Sections \ref{S3} and \ref{S2},  in Section \ref{Section 5}, by proving Theorem \ref{second-theorem}, we compute the number of orbits of certain finite groups acting on product of $k$ copies of certain finite sets. Finally, in Section \ref{Section 6}, by applying the group-theoretic results proved in Section \ref{Section 5} and also Proposition \ref{prop-two} (ii), we prove Theorem \ref{second generalization}.

\section{Proof of Theorem \ref{first generalization}}\label{Section 2}

\begin{proof}
We first give a proof for $L=\mathbb{Q}$ and then we show how the proof can be adjusted to the case of a number field $L$ of class number one. We let ${\rm M}_{m\times 1}(\mathbb{Z}/n\mathbb{Z})$ be the collection of $m\times 1$ column vectors with entries in $\mathbb{Z}/n\mathbb{Z}$.

For $r\mid n$, a positive divisor $r$ of $n$, the orbit of $\textbf{r}=\left(\begin{array}{cccc} r&0&\hdots&0\end{array}  \right)^T\in {\rm M}_{m\times 1}(\mathbb{Z}/n\mathbb{Z})$ is $\langle {\textbf{r}} \rangle=\{A\textbf{r};~A\in {\rm GL}_m(\mathbb{Z}/n\mathbb{Z})\}$. (By abuse of notation here we used $r$ both as an integer and also as an element of $\mathbb{Z}/n\mathbb{Z}$.) Note that if $A {\bf r}={\bf s},$ where $ \textbf{s}=\left(\begin{array}{cccc} s_1&s_2&\hdots&s_m\end{array}  \right)^T$, then  $(r, n)\mid (s_1, \ldots, s_m, n)$. Also since $A^{-1} {\bf s}={\bf r}$, we have $(s_1, \ldots, s_m, n)\mid (r, n)$. So $A {\bf r}={\bf s}$ implies that $(r, n)=(s_1, \ldots, s_m, n)$.

The above observation shows that  for two distinct positive divisors of $n$ like $r_1$ and $r_2$ the orbits $\langle {\bf r}_1 \rangle$ and $\langle {\bf r}_2 \rangle$ are disjoint. Indeed, if the two orbits intersect, for instance $A{\bf r}_1=B{\bf r}_2={\bf s}$ for some $A, B\in {\rm GL}_m(\mathbb{Z}/n\mathbb{Z})$, then $(r_1, n)=(r_2, n)= (s_1, \ldots, s_m, n)$, and thus $r_1=r_2$.

Next we note that the two elements $A {\bf r}$ and $B {\bf r}$ in $\langle {\bf r} \rangle$ are equal  if and only if $(n/r)\mid a_{i1}-b_{i1}$ for $1\leq i \leq m$. Since the map sending $A\in {\rm GL}_m(\mathbb{Z}/n\mathbb{Z})$ to $A\in {\rm GL}_m(\mathbb{Z}/(n/r)\mathbb{Z})$ is onto, then for $r\neq n$ with $r\mid n$
the cardinality of $\langle {\bf r} \rangle$ is 
$$\Psi(n/r):=\#\left\{ \left( \begin{array}{c} a_{11}\\ \vdots \\a_{m1} \end{array}  \right)\in {\rm M}_{m\times1}(\mathbb{Z}/(n/r)\mathbb{Z})    ;~\left( \begin{array}{ccc} a_{11}&\cdots&a_{1m}\\ \vdots&\ddots&\vdots \\ a_{m1}&\cdots&a_{mm} \end{array}      \right) \in {\rm GL}_m(\mathbb{Z}/(n/r)\mathbb{Z})    \right\}.$$
For $r=n$, we have $\langle {\bf r} \rangle=1$, and so we define $\Psi(1)=1$. Observe that, for a prime $p$, since the $p^m-1$ possibilities for the first column of matrices in ${\rm GL}_m(\mathbb{Z}/p\mathbb{Z})$ lift to $(p^\alpha)^m-(p^{\alpha-1})^m$ possibilities for the first column of matrices in ${\rm GL}_m(\mathbb{Z}/p^\alpha\mathbb{Z})$, we have $\Psi(p^\alpha)= (p^\alpha)^m-(p^{\alpha-1})^m$.

We claim that $\sum_{r\mid n} \Psi(n/r)=n^m$.  Since $\Psi$ is multiplicative, in order to show this, it would suffice to show it for $n=p^\alpha$, a prime power. We have
$$\sum_{r\mid p^\alpha} \Psi({p^\alpha}/r)= \left((p^\alpha)^m-(p^{\alpha-1})^m\right)+\cdots+ (p^m-1)+1=(p^\alpha)^m.$$

Now since $\sum_{r\mid n} \Psi(n/r)=n^m$, we conclude that the sets $\langle {\bf r} \rangle$ as $r$ varies over distinct divisors of $n$ form a partition of $\left( \mathbb{Z}/n\mathbb{Z}\right)^m$, and thus the number of orbits is equal to $d(n)$.

Next,  for a number field $L$ of class number one, we note that for any integral ideal $\mathfrak{r}\mid (n)$ of $\mathcal{O}_L$, we may choose a representative $r$ so that $\mathfrak{r}=(r)$. To process the argument as the case $L=\mathbb{Q}$, it suffices to note that if $r'=ur$ for some unit $u\in \mathcal{O}_L$, there is a matrix $A\in {\rm GL}_m (\mathcal{O}_L/n\mathcal{O}_L)$ whose $(1,1)$-entry  is $u$ such that $A\textbf{r}=\textbf{r}'$, where $\textbf{r}=\left(\begin{array}{cccc} r&0&\hdots&0\end{array}  \right)^T$  and $\textbf{r}'=\left(\begin{array}{cccc} r'&0&\hdots&0\end{array}  \right)^T$. This, in particular, implies that 
$$
\{A\textbf{r};~A\in {\rm GL}_m (\mathcal{O}_L/n\mathcal{O}_L)\}=\{A\textbf{r}';~A\in {\rm GL}_m (\mathcal{O}_L/n\mathcal{O}_L)\}.
$$
\end{proof}

\begin{remark}
For $L=\mathbb{Q}$ and $k=1$, a short proof of Theorem \ref{first generalization} can be obtained by noticing that the group action can be realized as the action of the Galois group of $x^n-1$ on the $n$-th roots of unity. Now the result follows since the roots of the $d$-th cyclotomic polynomial $\Phi_d(x)$  are those roots of unity that have exactly order $d$, the cyclotomic polynomials $\Phi_d(x)$ are irreducible over $\mathbb{Q}$, and   $x^n-1=\prod_{d\mid n} \Phi_d(x)$.
\end{remark}

\section{Proofs of Theorem \ref{main} and Corollary \ref{distribution}}\label{S3}

To prove Theorem \ref{main}, we require  ``Burnside's Lemma'' as stated below. 

\begin{lemma}[Burnside's Lemma]
Let $G$ be a finite group acting on a finite set $X$, and let $\chi(g)$ be the number of fixed points of $g$ on $X$. Then the number of orbits of $G$ in $X$ is equal to
$$ \frac{1}{|G|} \sum_{g\in G} \chi(g).$$
\end{lemma}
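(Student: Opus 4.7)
The plan is to establish Burnside's Lemma by the classical double-counting argument applied to the incidence set
$$S = \{(g,x) \in G \times X : gx = x\}.$$
First I would count $|S|$ by summing along the $G$-coordinate: for each $g \in G$, the number of $x$ with $gx = x$ is by definition $\chi(g)$, so $|S| = \sum_{g \in G} \chi(g)$.

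Next I would count $|S|$ by summing along the $X$-coordinate: for each $x \in X$, the number of $g \in G$ with $gx = x$ is the size of the stabilizer $G_x = \{g \in G : gx = x\}$, so $|S| = \sum_{x \in X} |G_x|$. Here I invoke the orbit-stabilizer theorem, which gives $|G_x| = |G|/|Gx|$, where $Gx$ denotes the orbit of $x$ under $G$.

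The key step is then to reorganize $\sum_{x \in X} |G_x|$ orbit-by-orbit. Partitioning $X$ into its $G$-orbits $\mathcal{O}_1, \ldots, \mathcal{O}_N$ (where $N$ is the number of orbits), for each orbit $\mathcal{O}_j$ one has
$$\sum_{x \in \mathcal{O}_j} |G_x| = \sum_{x \in \mathcal{O}_j} \frac{|G|}{|\mathcal{O}_j|} = |G|,$$
since every element of $\mathcal{O}_j$ has stabilizer of the same size $|G|/|\mathcal{O}_j|$, and there are $|\mathcal{O}_j|$ such elements. Summing over $j$ gives $|S| = N \cdot |G|$.

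Combining the two counts yields $\sum_{g \in G} \chi(g) = N \cdot |G|$, which upon dividing by $|G|$ is the desired formula. There is no real obstacle here; the only point requiring minor care is the orbit-stabilizer identity, which is a routine application of the bijection between $G/G_x$ and $Gx$ sending $hG_x \mapsto hx$.
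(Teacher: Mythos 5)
Your proof is correct and complete: the double count of the incidence set $\{(g,x): gx=x\}$ combined with the orbit--stabilizer theorem is the classical argument for Burnside's Lemma. The paper does not prove the lemma itself but simply cites \cite[Proposition 1.1]{Se16}, and your argument is essentially the standard proof found there, so there is no substantive difference in approach.
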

\begin{proof}
See \cite[Proposition 1.1]{Se16}.
\end{proof}

Now we are in a position to prove Theorem \ref{main}.

\begin{proof}[Proof of Theorem \ref{main}]
Write $L=F(Y)$. Let $\mathfrak{p}$ denote an unramified prime in $L/F$, and let $\mathfrak{P}$ be a prime above $\mathfrak{p}$. Let $E_Y$ be the family of polynomial equations defining $Y$. 
For any prime $\mathfrak{p}$ (resp., $\mathfrak{P}$) of $F$ (resp., $L$), we let $S_{Y, \mathfrak{p}}$ (resp., $S_{Y, \mathfrak{P}}$) denote the set of solutions of $E_Y (\modd \mathfrak{p})$ (resp. $E_Y (\modd \mathfrak{P})$) in the residue field $\mathcal{O}_F/\mathfrak{p}$ (resp., $\mathcal{O}_L/\mathfrak{P}$).

For any prime $\mathfrak{P}\mid\mathfrak{p}$, we write ${\rm Frob}_{\mathfrak{P}}$ for the generator of $\Gal((\mathcal{O}_L/\mathfrak{P})/(\mathcal{O}_F/\mathfrak{p}))$. Then we have
$$
N_{\mathfrak{p}}(Y)=|S_{Y, \mathfrak{p}}|
=\#\{y\in S_{Y, \mathfrak{P}};~\text{$y$ is fixed by ${\rm Frob}_{\mathfrak{P}}$}\},
$$
where the last quantity is independent of the choice of $\mathfrak{P}$.  

Now let $\sigma_{\mathfrak{P}}$ be the lift of ${\rm Frob}_{\mathfrak{P}}$ to ${\rm Gal}(F(Y)/F)$ and $\sigma_\mathfrak{p}=\{\sigma_{\mathfrak{P}};~\mathfrak{P} \mid \mathfrak{p}\}$ be the Artin symbol at $\mathfrak{p}$.  For each $m$, let $G(m)$ stand for the set of elements in $G={\rm Gal}(F(Y)/F)$ that fixes exactly $m$ points in $Y$. Then for any unramified $\mathfrak{p}$, we have that $N_{\mathfrak{p}}(Y)=m$ if and only if $\sigma_{\mathfrak{p}}\subseteq G(m)$. As one has
$$
\sum_{N(\mathfrak{p})\le x}N_{\mathfrak{p}}^k(Y)=\sum_{m=0}^{|Y|}\sum_{\substack{N(\mathfrak{p})\le x\\\sigma_{\mathfrak{p}}\subseteq G(m)}}m^k=\sum_{m=1}^{|Y|}m^k\sum_{\substack{N(\mathfrak{p})\le x\\\sigma_{\mathfrak{p}}\subseteq G(m)}}1,
$$
the Chebotarev density theorem yields that
\begin{equation}
\label{right}
\lim_{x\rightarrow \infty}\frac{1}{\pi_{F}(x)}\sum_{N(\mathfrak{p})\le x}N_{\mathfrak{p}}^k(Y)=\sum_{m=1}^{|Y|}m^k\frac{|G(m)|}{|G|}.
\end{equation}
We note that $\chi^k(g)$ is the number of points in $Y\times\cdots\times Y$, the $k$ copies of $Y$, fixed by $g$. Thus, we can rewrite the sum on the right of \eqref{right} as
$$
\sum_{m=1}^{|Y|}m^k\frac{|G(m)|}{|G|}=\frac{1}{|G|}\sum_{g\in G}\chi^k(g).
$$
Now we conclude the proof by applying Burnside's Lemma 
that asserts that the above average is the number of orbits of $G$ in the $k$ copies of $Y$.
\end{proof}

\begin{proof}[Proof of Corollary \ref{distribution}]
The proof follows the method of moments as described on pages 59-61 of \cite{E79}. We observe that by Theorem \ref{main} we have $$\alpha_k:=\lim_{n\rightarrow \infty} \int_{-\infty}^{\infty} z^k d H_n(z)= \lim_{n\rightarrow \infty} \frac{1}{\pi_F(n)} \sum_{N(\mathfrak{p})\leq n} N_{\mathfrak{p}}^k (Y)= M_k(G, Y).$$ Note that $$\alpha_k\ll |Y|^k.$$
Thus, for complex $t$-values $|t|<1$, the series 
$$\sum_{k=0}^{\infty} \alpha_k \frac{(it)^k}{k!}$$
converges absolutely. Hence, by \cite[Lemmas 1.43 and 1.44]{E79}, the $\alpha_k$ determine a unique distribution function $H$ that satisfies the conditions given in Corollary \ref{distribution}. 
\end{proof}

\section{Proofs of Propositions \ref{main-prop} and \ref{prop-two}}\label{S2}

\begin{proof}[Proof of Proposition \ref{main-prop}]
(i) As there are only finitely many primes $p$ with $(p,n)>1$, we may assume that $(p,n)=1$.
 In particular, all summations below are over primes $p$ with $(p,n)=1$.

Since $\Bbb{F}^{\times}_p$ is a cyclic group of order $p-1$, we have 
$$
N_{p}(f_{n,1})=(p-1,n).
$$
Thus,
$$
\sum_{p\le x}N_{p}^k(f_{n,1})=\sum_{\substack{p\le x\\d=(p-1,n)}}d^k=\sum_{d\mid n}d^k\sum_{\substack{p\le x\\d=(p-1,n)}}1
=\sum_{d\mid n}d^k\sum_{\substack{p\le x\\d\mid p-1\\(\frac{p-1}{d},\frac{n}{d})=1}}1,
$$
which, by the M\"obius inversion, is
$$
\sum_{d\mid n} d^k \sum_{\substack{p\le x\\d\mid p-1}}\sum_{e\mid(\frac{p-1}{d},\frac{n}{d})}\mu(e)
=\sum_{\substack{d,e\\de\mid n}}d^k \mu(e)\sum_{\substack{p\le x\\de\mid p-1}}1.
$$

Now by the prime number theorem for arithmetic progressions, the last inner sum is asymptotic to 
$$
\frac{1}{\varphi(de)}\pi(x),
$$
as $x\rightarrow \infty$, which completes the proof.

(ii) We may assume that $(p,na)=1$.
 In particular, all summations below (and also in (iii)) are over primes $p$ with $(p,na)=1$.

It is known that $N_{p}(f_{n,a})\neq 0$ if and only if
$$
a^{\frac{p-1}{d}}\equiv 1\enspace(\modd p),
$$
where $d=(p-1,n)$. Moreover, if $N_{p}(f_{n,a})\neq 0$, then $N_{p}(f_{n,a})= (p-1,n)$ (see \cite[Proposition 4.2.1]{IR90}). Thus, we have
$$
\sum_{p\le x}N_{p}^k(f_{n,a})
=\sum_{\substack{p\le x\\d=(p-1,n)\\a^{\frac{p-1}{d}}\equiv 1\enspace(\modd p)}}d^k
=\sum_{d\mid n}d^k\sum_{\substack{p\le x\\d=(p-1,n)\\a^{\frac{p-1}{d}}\equiv 1\enspace(\modd p)}}1
=\sum_{d\mid n}d^k\sum_{\substack{p\le x\\d\mid p-1\\(\frac{p-1}{d},\frac{n}{d})=1\\a^{\frac{p-1}{d}}\equiv 1\enspace(\modd p)}}1.
$$
Again, the M\"obius inversion yields
\begin{equation}
\label{last sum}
\sum_{p\le x}N_{p}^k(f_{n,a})
=\sum_{d\mid n}\sum_{\substack{p\le x\\d\mid p-1\\a^{\frac{p-1}{d}}\equiv 1\enspace(\modd p)}}\sum_{e\mid(\frac{p-1}{d},\frac{n}{d})}\mu(e)
=\sum_{\substack{d,e\\de\mid n}}d^k \mu(e)\sum_{\substack{p\le x\\de\mid p-1\\a^{\frac{p-1}{d}}\equiv 1\enspace(\modd p)}}1.
\end{equation}

Now we analyse the last inner sum in \eqref{last sum}. For $d=1$, the sum is equal to 
$$
\sum_{\substack{p\le x\\de\mid p-1}}1
$$
since the condition $a^{p-1}\equiv 1\enspace(\modd p)$ is always valid by Fermat's little theorem. This contributes
\begin{equation}
\label{onee}
\frac{1}{\varphi(de)}\pi(x),
\end{equation}
as $x\rightarrow \infty$. For $d\ge 2$, on the one hand, $de\mid p-1$ implies that $d\mid p-1$, which together with the condition
$$
a^{\frac{p-1}{d}}\equiv 1\enspace(\modd p)
$$ 
asserts that $p$ splits completely in $\Bbb{Q}(\zeta_{d},a^{1/d})/\Bbb{Q}$.  On the other hand, the condition $de\mid p-1$ tells us that the prime $p~ (\neq 2)$ splits completely in $\Bbb{Q}(\zeta_{de})/\Bbb{Q}$. Thus, for $d\geq 2$, the last inner sum in \eqref{last sum} is
\begin{equation}
\label{bigger or equal to 2}
\#\{p\le x; ~\text{$p$ spilts completely in $\Bbb{Q}(\zeta_{de},a^{1/d})/\Bbb{Q}$}\}\sim\frac{1}{d\varphi(de)}\pi(x),
\end{equation}
as $x\rightarrow \infty$, where the asymptotic behaviour is assured by the Chebotarev density theorem for the Galois extension $\Bbb{Q}(\zeta_{de},a^{1/d})/\Bbb{Q}$, and the fact that under given conditions on $a$,  $[\Bbb{Q}(\zeta_{de},a^{1/d}):\Bbb{Q}]=d\varphi(de)$ (see \cite[Lemma 1]{M05}). Applying \eqref{onee} and \eqref{bigger or equal to 2} in \eqref{last sum} and observing that $d^{k-1}=1$ if $d=1$, we conclude the proof.

(iii) It suffices to note that the sum is, in fact, equal to 
$$
\sum_{\substack{p\le x\\d=(p-1,n)\\a^{\frac{p-1}{d}}\equiv 1\enspace(\modd p)}}d^{k_1}d^{k_2}.
$$
Now the result follows from part (ii).
%
%
%
\end{proof}

\begin{proof}[Proof of Proposition \ref{prop-two}] 

During the proof we assume that $p \geq 5$ is a prime such that $p\nmid \ell N_E$, where $N_E$ is the conductor of $E$. 

(i) Let $E_p(\mathbb{F}_p)$ be the set of $\mathbb{F}_p$-points of $E_p$ (the reduction modulo $p$ of $E$).  Observe that $N_p(E[\ell])=|E_p(\mathbb{F}_p)[\ell]|$, 
where $E_p(\mathbb{F}_p)[\ell]$ is the set of $\ell$-torsion points of $E_p(\mathbb{F}_p)$. Note that since ${E_p} (\mathbb{F}_p)[\ell] \subseteq {E_p}[\ell] \simeq \mathbb{Z}/\ell\mathbb{Z} \times \mathbb{Z}/\ell\mathbb{Z}$,  ${E_p} (\mathbb{F}_p)[\ell]$ has either $1$, $\ell$, or $\ell^2$ elements. Moreover, it is known that
$N_p(E[\ell])= |E_p(\mathbb{F}_p)[\ell]|=\ell^2 $ if and only if $p$ splits completely in the $\ell$-division field $L=\mathbb{Q}(E[\ell])$ of $E$ (see \cite[Lemma 2]{Mu83}).

If $N_p(E[\ell])=\ell$, then for a prime $\mathfrak{P} \mid p$ we can conclude that $\sigma_\mathfrak{P}$ (the lift of ${\rm Frob}_{\mathfrak{P}}$ to ${\rm Gal}( \mathbb{Q}(E[\ell])/\mathbb{Q})$) can have a representation in the form
\begin{equation}
\label{Artin}
\left( \begin{array}{cc}1 & b \\
0 & c \end{array} \right)\in \GL_2(\mathbb{F}_\ell)\backslash\left\{\left( \begin{array}{cc}1 & 0 \\
0 & 1 \end{array} \right)\right\}
\end{equation}
for some  $b\in \Bbb{F}_{\ell}$ and $c\in \Bbb{F}_{\ell}^{\times}$. Thus, $N_p(E[\ell])=\ell$ if and only if the Artin symbol $\sigma_p$ considered as a conjugacy class of 
$\GL_2(\mathbb{F}_\ell)$ has an element of the form \eqref{Artin}. By the Jordan canonical form, a matrix of the form \eqref{Artin} is  conjugate to either
\begin{equation}
\label{Jordan}
\left( \begin{array}{cc}1 & 1 \\
0 & 1 \end{array} \right)\text{ or }
\left( \begin{array}{cc}1 & 0 \\
0 & c \end{array} \right)
\end{equation}
for some  $c\in \Bbb{F}_{\ell}^{\times}\backslash\{1\}$.
Now from the classification of conjugacy classes of $\GL_2(\mathbb{F}_\ell)$ (see \cite[p. 714, Table 12.4]{L93}), it may be computed that the number of elements of such forms in $\GL_2(\mathbb{F}_\ell)$ is $\ell^3-2\ell-1$. 
(Indeed, the ``unipotent" instance in \eqref{Jordan} contributes $\ell^2 - 1$ conjugate elements, and the ``rational not central"  instances in \eqref{Jordan}  contribute $(\ell-2)(\ell^2+\ell)$ elements.)

Let $\pi_E(x;\ell,i)$ for $0\leq i \leq 2$ be defined as
\begin{equation}
\label{def}
\pi_E(x;\ell,i)=\#\{p\le x;~  N_p(E[\ell])=\ell^i \}.
\end{equation}

The above discussion, together with the Chebotarev density theorem and the fact that by our assumption $[\mathbb{Q}(E[\ell]): \mathbb{Q}]=(\ell^2-\ell)(\ell^2-1)$, yields that, as $x\rightarrow \infty$,
$$
\pi_E(x;\ell,1)\sim \frac{\ell^3-2\ell-1}{(\ell^2-\ell)(\ell^2-1)}\pi(x)\text{ and }
\pi_E(x;\ell,2)\sim \frac{1}{(\ell^2-\ell)(\ell^2-1)}\pi(x).
$$
Hence, as $x\rightarrow \infty$,
$$
\pi_E(x;\ell,0) \sim \frac{\ell^4-2\ell^3-\ell^2+3\ell}{(\ell^2-\ell)(\ell^2-1)}\pi(x).
$$

Clearly, it follows from \eqref{def} that
$$
\sum_{p\le x}N_{p}^k(E[\ell])=1^k\cdot \pi_E(x;\ell,0) +\ell^k\cdot \pi_E(x;\ell,1) +\ell^{2k}\cdot \pi_E(x;\ell,2).
$$
Therefore,
\begin{equation}
\label{main2}
\lim_{x\rightarrow \infty}\frac{1}{\pi(x)}\sum_{p\le x}N_{p}^k(E[\ell])
=\frac{\ell^4-2\ell^3-\ell^2+3\ell}{(\ell^2-\ell)(\ell^2-1)}+\ell^k \frac{\ell^3-2\ell-1}{(\ell^2-\ell)(\ell^2-1)}+\ell^{2k}\frac{1}{(\ell^2-\ell)(\ell^2-1)} .
\end{equation}

(ii) We have 
\begin{equation}
\label{zero}
\sum_{p\leq x} N_p^k(E[\ell])= \sum_{\substack{ p\leq x \\ p\textrm{ splits in } K }} N_p^k(E[\ell])+ \sum_{\substack{ p\leq x \\ p\textrm{ is inert or ramifies in } K }}  N_p^k(E[\ell]).
\end{equation}

It is known that if $p$ is inert or ramifies in $K$, then $p$ is supersingular (\cite[p. 182, Theorem 12]{L87}), which implies that (for $p\geq 5$) $|{E_p}( \mathbb{F}_p)|=p+1$ (\cite[p. 145, Exercise 5.10 (b)]{Si86}) and the odd part of ${E_p}(\mathbb{F}_p)$ is cyclic (\cite[Theorem 1]{Mu87}). So, for odd $\ell$, we have $N_p(E[\ell])=(\ell, p+1)$. 
Following the proof of Proposition \ref{main-prop} (i), we conclude that 
\begin{equation}
\label{one}
\lim_{x\rightarrow \infty}\frac{1}{\pi(x)}\sum_{\substack{ p\leq x \\ p\textrm{ is inert or ramifies in } K }}  N_p^k(E[\ell])= \frac{1}{2} M_k(\ell)= \frac{\ell-2}{2(\ell-1)}+\ell^{k} \frac{1}{2(\ell-1)}.
\end{equation}

For $0\leq i \leq 2$, we let 
$$
\pi_E^s(x;\ell,i)=\#\{p\le x;~p \textrm{ splits in } K \textrm{ and } N_p(E[\ell])=\ell^i \}.
$$

 It follows from the definition that 
\begin{equation}
\label{two}
\sum_{\substack{ p\le x \\ p\textrm{ splits in } K }}N_{p}^k(E[\ell])=1^k\cdot \pi_E^s(x;\ell,0) +\ell^k\cdot \pi_E^s(x;\ell,1) +\ell^{2k}\cdot \pi_E^s(x;\ell,2).
\end{equation}

 Recall that $N_p(E[\ell])=\ell^2$ if and only if
 $p$ splits completely in $L=\mathbb{Q}(E[\ell])$ (\cite[Lemma 2]{Mu83}). Now let $p\mathcal{O}_K = (\pi_p \mathcal{O}_K) (\bar{\pi}_p \mathcal{O}_K)$, then $p\mathcal{O}_L$ splits completely in $L$ if and only if $p\mathcal{O}_K$ splits completely in $L$. Also since,  for odd $\ell$, $L=\mathbb{Q}(E[\ell])=K(E[\ell])$ (\cite[Lamma 6]{Mu83}) and $[K(E[\ell]):K]=\ell^2-1$ (according to the assumption), by an application of the Chebotarev density theorem for the extension $K(E[\ell])/K$,  we have
\begin{align*}
\pi_E^s(x;\ell,2)
&=\#\{p\leq x;~p\mathcal{O}_K~\textrm{splits in } K \textrm { and } p\mathcal{O}_L \textrm{ splits in } \mathbb{Q}(E[\ell])\}\\
&=\frac{1}{2} \# \{ \mathfrak{p}\subset \mathcal{O}_K;~N(\mathfrak{p}) \leq x \textrm { and } \mathfrak{p} \textrm{ splits in } {K}(E[\ell])\}+O\left(\frac{x^{1/2}}{\log{x}}\right)\\
&=\frac{\pi_K(x)}{2(\ell^2-1)}(1+o(1))+O\left(\frac{x^{1/2}}{\log{x}}\right).
\end{align*}

The above asymptotic formula together with applications of the Chebotarev density theorem and the fact that $\pi_K(x) \sim \pi(x)$, as $x\rightarrow \infty$,  result in 
\begin{equation}
\label{three}
\pi_E^s(x;\ell,0)\sim \delta_0^s(\ell) \pi(x),~~\pi_E^s(x;\ell,1)\sim \delta_1^s(\ell) \pi(x),~\textrm{ and }~\pi_E^s(x;\ell,2)\sim \frac{1}{2(\ell^2-1)}\pi(x),
\end{equation}
as $x\rightarrow \infty$, where the densities $\delta_0^s(\ell)$ and $\delta_1^s(\ell)$ exist following the discussion at the beginning of (i). Hence, from \eqref{two} with $k=0$, we have
\begin{equation}
\label{equation1}
\delta_0^s(\ell)+\delta_1^s(\ell)+\frac{1}{2(\ell^2-1)}=\frac{1}{2}.
\end{equation}

Also, from \eqref{two} with $k=1$, we have
\begin{equation}
\label{equation2}
\delta_0^s(\ell)+\ell \delta_1^s(\ell)+\frac{\ell^2}{2(\ell^2-1)}= \lim_{x\rightarrow \infty} \frac{1}{\pi(x)}\sum_{\substack{ p\le x \\ p\textrm{ splits in } K }}N_{p}(E[\ell]).
\end{equation}

For a splitting prime $p$, writing $p\mathcal{O}_K=  (\pi_p \mathcal{O}_K) (\bar{\pi}_p \mathcal{O}_K)$ and denoting the reduction (mod $ \pi_p \mathcal{O}_K$) of $E$  by $E_{\pi_p}(\mathcal{O}_{K}/\pi_p \mathcal{O}_{K})$, we have 
$$N_p(E[\ell])=| {E_p}(\mathbb{F}_p)[\ell]|= |{E}_{\pi_p} (\mathcal{O}_{K}/\pi_p \mathcal{O}_{K})[\ell]|= N_{\pi_p\mathcal{O}_K} (E[\ell]).$$
A similar identity holds by replacing $\pi_p$ with $\bar{\pi}_p$. Thus,
$$\sum_{\substack{ p\le x \\ p\textrm{ splits in } K }}N_{p}(E[\ell])=\frac{1}{2} \sum_{\substack{\mathfrak{p}\subset \mathcal{O}_K \\N(\mathfrak{p})\leq x}} N_\mathfrak{p} (E[\ell])+O\left( \frac{x^{1/2}}{\log{x}}   \right).$$
From this and the fact that $\pi(x) \sim \pi_K(x)$, as $x\rightarrow \infty$, we obtain 
$$ \lim_{x\rightarrow \infty} \frac{1}{\pi(x)}\sum_{\substack{ p\le x \\ p\textrm{ splits in } K }}N_{p}(E[\ell])= \lim_{x\rightarrow \infty} \frac{1}{2\pi_K(x)} \sum_{\substack{\mathfrak{p}\subset \mathcal{O}_K \\N(\mathfrak{p})\leq x}} N_\mathfrak{p} (E[\ell]).$$
Now Theorem \ref{main} yields that 
$$\lim_{x\rightarrow \infty} \frac{1}{2\pi_K(x)} \sum_{\substack{\mathfrak{p}\subset \mathcal{O}_K \\N(\mathfrak{p})\leq x}} N_\mathfrak{p} (E[\ell])=\frac{1}{2} M_1({\rm GL}_1(\mathcal{O}_K/\ell \mathcal{O}_K), \mathcal{O}_K/\ell \mathcal{O}_K). $$
We know that $K$ has class number 1 (see \cite[Appendix C, Example 11.3.1]{Si86}).  Therefore, by Theorem \ref{first generalization}, we have
$$M_1({\rm GL}_1(\mathcal{O}_K/\ell \mathcal{O}_K), \mathcal{O}_K/\ell \mathcal{O}_K)=d_K(\ell),$$
where $d_K(\ell)$ is the divisor function for the number field $K$. Applying this value in \eqref{equation2} yields 
\begin{equation}
\label{equation3}
\delta_0^s(\ell)+\ell \delta_1^s(\ell)+\frac{\ell^2}{2(\ell^2-1)}= \frac{1}{2} d_K(\ell).\end{equation}

Solving the system of equations \eqref{equation1} and \eqref{equation3} yields
$$\delta_0^s(\ell)=\frac{\ell^2-(d_K(\ell)-2)\ell-d_K(\ell)}{2(\ell^2-1)}~\textrm{ and }~ \delta_1^s(\ell)=\frac{d_K(\ell)-2}{2(\ell-1)}.$$

Employing these values in \eqref{three} together with \eqref{two}, \eqref{one}, and \eqref{zero} yield the result.
\end{proof}

\section{Proof of Theorem \ref{second-theorem}}\label{Section 5}
\begin{proof}[Proof]
{\it (i)} Let $F=\mathbb{Q}$ and  $Y=\{\zeta_n^i;~i=1, \ldots, n  \}$ be the set of zeros of the polynomial $f_{n,1}(x)=x^n-1$ in $\overline{\mathbb{Q}}$, where $\zeta_n$ denotes a primitive $n$-th root of unity.  Consider the bijection $\psi: X=\mathbb{Z}/n\mathbb{Z} \rightarrow Y$, where $\psi(i)=\zeta_n^i$ and note that $\phi:G=\left(\mathbb{Z}/n\mathbb{Z}\right)^\times \rightarrow {\rm Gal}(F(Y)/F)$ defined by $\phi(d)=\phi_d$, where $\phi_{d}(\zeta_n^j)=\zeta_n^{jd}$, is a group isomorphism. 
Thus, from Theorem \ref{main} and Proposition \ref{main-prop} (i) we have
$$M_k(G, X)= \lim_{x\rightarrow \infty}\frac{1}{\pi(x)}\sum_{p\le x}N_{p}^k(f_{n,1})
=M_k(n).$$

{\it (ii)}  Let $a$ be a square-free positive integer if $n$ is odd, and let $a$ be a square-free positive integer such that $a\nmid n$ if $n$ is even. Let  the number $a^{1/n}$ be a real solution of the equation $x^n-a=0$.
Let $F=\mathbb{Q}$ and $Y=\{(a^{1/n}\zeta_n^i, \zeta_n^j);~1\leq i, j \leq n  \}$ be the set of zeros of the system of polynomials $f_{n,a}(x)=x^n-a$  and $f_{n, 1}(y)=y^n-1$   in $\overline{\mathbb{Q}} \times \overline{\mathbb{Q}}$. Consider the bijection $\psi: X=\left( \{1\} \times \mathbb{Z}/n\mathbb{Z} \right)  \times \left( \{0\} \times \mathbb{Z}/n\mathbb{Z} \right)  \rightarrow Y$, where $\psi(((1, i), (0, j)))=(a^{1/n}\zeta_n^i, \zeta_n^j)$ and note that  $\phi: G \rightarrow  {\rm Gal}(F(Y)/F)$  defined by $\phi\left( { \left( \begin{array}{cc} 1&0\\b&d \end{array} \right)} \right)=\phi_{b, d}$ is an isomorphism, where $\phi_{b, d} ((a^{1/n} \zeta_n^i, \zeta_n^j))=(a^{1/n} \zeta_n ^{b+id}, \zeta_n^{jd})$.

We note that $N_{p}(Y)$ is the number of solutions $(x, y)$ of $x^n\equiv a\enspace(\modd p)$ and $y^n\equiv 1\enspace(\modd p)$, which is equal to $N_{p}(f_{n,a})N_{p}(f_{n,1})$. Thus, from Theorem \ref{main}  we have
$$
M_k(G, X)=\lim_{x\rightarrow \infty}\frac{1}{\pi(x)}\sum_{p\le x}N_{p}^k(Y)=\lim_{x\rightarrow \infty}\frac{1}{\pi(x)}\sum_{p\le x}\left(N_{p}(f_{n,a})N_{p}(f_{n,1})\right)^k,
$$
where the limit on the right can be computed by Proposition \ref{main-prop} (iii).


{\it (iii)} For $\ell \neq 2$, let $E[\ell]$ be the $\ell$-torsion subgroup of the elliptic curve 
 $E_{17a3}$ (with Cremona label $17a3$),  and, for $\ell =2$, let $E[\ell]$ be corresponded to $E_{11a2}$ (with Cremona label $11a2$). Then ${\rm Gal}(\mathbb{Q}(E[\ell])/\mathbb{Q})\simeq {\rm GL}_2(\mathbb{Z}/\ell \mathbb{Z})$ (see \cite{LMFDB} for details). 
 
 For such $E$, let $F=\mathbb{Q}$ and $Y= E[\ell]$.  Consider the bijection $\psi: X=\mathbb{Z}/\ell\mathbb{Z} \times \mathbb{Z}/\ell\mathbb{Z}  \rightarrow E[\ell]$ and note that $G={\rm GL}_2(\mathbb{Z}/\ell \mathbb{Z}) \simeq_\phi {\rm Gal}(F(Y)/F)$. 
Thus, from Theorem \ref{main} and Proposition \ref{prop-two} (i),  we have
$$ M_k(G, X)= \lim_{x\rightarrow \infty}\frac{1}{\pi(x)}\sum_{p\le x}N_{p}^k(E[\ell])
=\frac{\ell^4-2\ell^3-\ell^2+3\ell}{(\ell^2-\ell)(\ell^2-1)}+\ell^k\frac{\ell^3-2\ell-1}{(\ell^2-\ell)(\ell^2-1)}
+\ell^{2k}\frac{1}{(\ell^2-\ell)(\ell^2-1)} .$$

\end{proof}

\section{Proof of Theorem \ref{second generalization}}\label{Section 6}

(i) Since the corresponding action of $\Gal(F(\mathbb{G}_m[n])/F)$ on $\mathbb{G}_m[n]$ is a realization of the canonical action of $G=(\Bbb{Z}/n\Bbb{Z})^\times$ on $X=\Bbb{Z}/n\Bbb{Z}$, the assertion follows from  Theorem \ref{second-theorem} (i) immediately.

(ii) Let $\mathbb{T}$ over $\mathbb{Q}$ be defined by the equation $x^2-my^2=1$, where $m$ is a square-free integer. Then
$$\mathbb{T}[n]=\left\{ \left(\frac{\zeta_n^i+\zeta_n^{-i}}{2}, \frac{\zeta_n^i-\zeta_n^{-i}}{2\sqrt{m}}  \right);~1\leq i\leq n    \right\}$$
is the set of $n$-torsion points of $\mathbb{T}$. By \cite[Lemma 2.1]{CK12},we know that there is a constant $C$ such that for $(n, C)=1$, we have
$\mathbb{Q}(\mathbb{T}[n])=\mathbb{Q}(\zeta_n+\zeta_n^{-1}, (\zeta_n-\zeta_n^{-1})/\sqrt{m})$ and $[\mathbb{Q}(\mathbb{T}[n]):\mathbb{Q}]=\varphi(n)$. Thus, for $1\leq d \leq n$ with $(d, n)=1$, the maps
$$\sigma_d \left(\frac{\zeta_n+\zeta_n^{-1}}{2}, \frac{\zeta_n-\zeta_n^{-1}}{2\sqrt{m}}  \right)=  \left(\frac{\zeta_n^d+\zeta_n^{-d}}{2}, \frac{\zeta_n^d-\zeta_n^{-d}}{2\sqrt{m}}  \right)$$
give the $\mathbb{Q}$-automorphisms of $\mathbb{Q}(\mathbb{T}[n])$, and therefore the action of $\Gal(\mathbb{Q}(\mathbb{T}[n])/\mathbb{Q})$ on $\mathbb{T}[n]$ is a realization of the action of $G=(\Bbb{Z}/n\Bbb{Z})^\times$ on $X=\Bbb{Z}/n\Bbb{Z}$. Now the result follows from Theorem \ref{second-theorem} (i).

(iii)  Let $E$ be a non-CM elliptic curve defined over $F$, and let $n=\prod_{\ell} \ell$ be square-free. By Serre's open image theorem \cite{Se72}, there exists a constant $C$ such that for $(\ell, C)=1$, we have  $\Gal(F(E[\ell])/F)\simeq {\rm GL}_2(\mathbb{Z}/\ell\mathbb{Z})$. 
We note that
$$
\Gal(F(E[n])/F)\simeq \prod_{\ell\mid n} \Gal(F(E[\ell])/F)
$$
acts on $\prod_{\ell\mid n} \left( \mathbb{Z}/\ell\mathbb{Z} \times \mathbb{Z}/\ell\mathbb{Z} \right)^k$ componentwise (i.e., the action is the product of the actions of  \\$\Gal(F(E[\ell])/F)$ on $\left( \mathbb{Z}/\ell\mathbb{Z} \times \mathbb{Z}/\ell\mathbb{Z} \right)^k$). Thus, we have
\begin{equation}
\label{product}
M_k(E/F, n)=\prod_{\ell \mid n} M_k(E/F, \ell). 
\end{equation}
Now applying \eqref{product}  together with Theorem \ref{second-theorem} (iv) completes the proof.

(iv) The proof follows along the same lines as (iii) via employing Deuring's theorem \cite{D41} on the image of $\Gal(K(E[\ell])/K)$ and Proposition \ref{prop-two} (ii).


\subsection*{Acknowledgement}
The authors would like to thank the referee for the valuable comments and suggestions.

\end{document}